\newtheorem{lemma}{Lemma}[section]
\newtheorem{conj}[lemma]{Conjecture}
\newtheorem{exa}[lemma]{Example}
\newtheorem{cor}[lemma]{Corollary}
\newtheorem{defi}[lemma]{Definition}
\newtheorem{theo}[lemma]{Theorem}
\newtheorem{rem}[lemma]{Remark}
\newtheorem{prop}[lemma]{Proposition}
\numberwithin{equation}{section}
\numberwithin{lemma}{section}
 \def\mG{\mathsf{G}}
 \def\mV{\mathsf{V}}
 \def\mE{\mathsf{E}}
 \def\mP{\mathsf{P}}
 \def\mv{\mathsf{v}}
 \def\me{\mathsf{e}}
 \def\mf{\mathsf{f}}
 \def\mw{\mathsf{w}}
 \def\mf{\mathsf{f}}
\newcommand{\R}{\mathbb{R}}
\newcommand{\N}{\mathbb{N}}
\newcommand{\D}{\mathcal{D}}
\newcommand\Graph{\mathcal{G}}
\DeclareMathOperator{\Vol}{Vol}
\DeclareMathOperator{\sym}{sym}
\newcommand{\diff}{\ensuremath{\mathop{}\!\mathrm{d}}}
\title{Lower estimates on eigenvalues of quantum graphs} 
\subjclass[2010]{}
\keywords{}
\author[D.~Mugnolo]{Delio Mugnolo}
\author[M.~Plümer]{Marvin Plümer}
\address{Delio Mugnolo, Lehrgebiet Analysis, Fakult\"at Mathematik und Informatik, Fern\-Universit\"at in Hagen, D-58084 Hagen, Germany}
\email{delio.mugnolo@fernuni-hagen.de}
\address{Marvin Plümer, Lehrgebiet Analysis, Fakult\"at Mathematik und Informatik, Fern\-Universit\"at in Hagen, D-58084 Hagen, Germany}
\email{marvin.pluemer@fernuni-hagen.de}
\subjclass[2010]{34B45 (05C50 35P15 81Q35)}
\keywords{Spectral geometry of quantum graphs, Planar graphs, Double Cover Conjecture, Normalized Laplacians}
\thanks{
The authors were partially supported by the Deutsche Forschungsgemeinschaft (Grant 397230547).
}
\begin{document}

\begin{abstract}
A  method for estimating the spectral gap along with higher eigenvalues of non-equilateral quantum graphs has been introduced by Amini and Cohen-Steiner in~\cite{AmiCoh18} recently: it is based on a new transference principle between discrete and continuous models of a graph. We elaborate on it by developing a more general transference principle and by proposing alternative ways of applying it. To illustrate our findings, we present several spectral estimates on planar metric graphs that are oftentimes sharper than those obtained by isoperimetric inequalities and further previously known methods.
\end{abstract}

\maketitle

\section{Introduction}

Our aim in this paper is to present some new developments in the theory of Laplacians on metric graphs: in particular, we discuss the role played by planarity and related concepts in spectral theory.

Intuitively, a metric graph is a collection of $E$ intervals $(0,\ell_1),\ldots,(0,\ell_E)$ whose endpoints are identified in a graph-like fashion, so that each interval is regarded as an edge and the identified endpoints as vertices; a more precise definition can be found e.g.\ in the monographs~\cite{BerKuc13,Mug14}, but this will suffice for our purposes. In this note, we are restricting ourselves to the case of \emph{finite} metric graphs of \emph{finite} total length, i.e., 
\begin{equation}\label{eq:comp}
E<\infty\quad\hbox{and}\quad L:=\sum_{j=1}^E \ell_j<\infty.
\end{equation}

Accordingly, each metric graph $\Graph$ induces a combinatorial graph $\mG$ (with vertex set $\mV$ and edge set $\mE$, $E=|\mE|$) that describes its connectivity; conversely, the underlying combinatorial graph $\mG$ and the vector of edge lengths $(0,\ell_\me)_{\me\in\mE}$ fully determine the metric graph $\Graph$: for this reason, it is natural to consider a weighted version of $\mG$ each of whose edges $\me$ is assigned the weight $\ell_\me$. If no confusion is possible, we tacitly identify $\mG$ with its weighted version.

 At the same time, $\Graph$ can be turned into a metric measure space by 
\begin{itemize}
\item endowing each interval with the Lebesgue measure and
\item extending canonically the usual combinatorial distance defined on $\mG$;
\end{itemize}
again, we refer to~\cite[Chapter 3]{Mug14} for details. In this way, it is natural to introduce the Hilbert space
\[
L^2(\Graph)\equiv \bigoplus_{j=1}^E L^2(0,\ell_j)
\]
of square-integrable functions over the metric graph. An element of $L^2(\Graph)$ is hence a vector $f\equiv(f_1,\ldots,f_E)$ of functions with $f_j\in L^2(0,\ell_j)$ for each $j$; one can then define differential operators edgewise. Especially the operator
\[
A:(f_1,\ldots,f_E)\mapsto (f_1'',\ldots,f_E'')
\]
has enjoyed much attention over the last decades, beginning with the pioneering investigations in~\cite{RueSch53}. 
Throughout this paper we adopt the convention that the Laplacian is positive semidefinite, i.e., $\Delta:=-A$. It is easy to see that 
\[
A_{|\bigoplus_{j=1}^E H^2_0(0,\ell_j)}
\]
is not essentially self-adjoint and $2E$ transmission conditions have to be imposed in the vertices of the graph in order to determine a self-adjoint extension. On any given metric graph, uncountably many self-adjoint extensions of $A$ exist, the most canonical one being defined by imposing continuity across the vertices along with a Kirchhoff-type condition (i.e., for each vertex $\mv$ the sum of all normal derivatives evaluated along edges incident in $\mv$ has to vanish) on elements of $\bigoplus_{j=1}^E H^2(0,\ell_j)$. Equipped with these natural vertex conditions, $\Delta$ is often referred to as the \emph{standard} Laplacian. We will only study this Laplacian realization in the following.

Finite metric graphs of finite total length share an important property with compact manifolds without boundary, domains with Neumann boundary conditions, and finite combinatorial graphs: the standard Laplacian is a self-adjoint, positive semidefinite operator and -- as long as these underlying metric graphs are connected -- the Laplacian's null space is one-dimensional, as it coincides with the space of all constant functions. Furthermore, the standard Laplacian is associated with a Dirichlet form, hence it generates a Markovian semigroup: in the context of metric graphs, this has been proved in~\cite{KraMugSik07}.

Because the embedding of $H^2(\Graph)$ into $L^2(\Graph)$ is compact -- in fact, even of trace class -- the operator $\Delta$ has pure point spectrum consisting of countably many eigenvalues 
\[
0=\lambda_1(\Graph)\le \lambda_2(\Graph)\le \ldots
\]
accumulating at $\infty$; we have already mentioned that $\lambda_1(\Graph)$ is simple and hence $\Delta$ has a spectral gap, i.e., a positive distance between the spectral bound $\lambda_1(\Graph)=0$ and the second-lowest eigenvalue $\lambda_2(\Graph)>0$. In particular, as $t\to\infty$ the semigroup $(e^{-t\Delta})_{t\ge 0}$ generated by $-\Delta$ converges to the orthogonal projector onto the null space exponentially, at a rate given by $e^{-t\lambda_2(\Graph)}$ (\cite[Cor.~5.2]{KraMugSik07}). Many investigations have been devoted in recent years to the issue of estimating $\lambda_2(\Graph)$ and hence the convergence rate of the semigroup: apart from the pioneering work~\cite{Nic87}, we refer in particular to~\cite{KenKurMal16,Ari16,BanLev17,BerKenKur17,BerKenKur19,KosNic19}.
All these articles aim at estimating $\lambda_2(\Graph)$ based on different quantities of combinatorial and/or metric nature, including total length, diameter, total number of vertices or edges, Cheeger constants, etc.

Refined results are furthermore available provided the metric graph belongs to specific classes: we mention e.g.\ the upper estimate in~\cite{Roh16}, which holds for \emph{trees}: these are by definitions metric graphs without cycles, hence have poor connectivity. This paper has in a certain sense an opposite goal: we are going to prove a family of \emph{lower} estimates if suitable assumptions ensuring \emph{rich} connectivity are imposed  on the combinatorial graph $\mG$ underlying $\Graph$.

Our results are essentially based on a transference principle recently developed in~\cite{AmiCoh18} in order to deduce spectral estimates for discrete Laplacians from corresponding estimates known to hold for Laplace--Beltrami operators on manifolds of given genus; it was observed already in~\cite{AmiCoh18} that this transference principle applies to metric graphs as well, and we are going to elaborate on this idea.

The method in~\cite{AmiCoh18} is based on the introduction of a double cover of the metric graph $\Graph$, and in turn of a suitable vicinity graph.
 In Section~\ref{sec:gen-set} we first generalize the basic lower estimate by Amini and Cohen-Steiner by analyzing $m$-fold covers $\mathfrak U$ of $\Graph$, $m\ge 2$: both the elements of $\mathfrak U$ and the associated vicinity graph 
 play a central role and different choices of $\mathfrak U$ lead to different estimates.
 This is indeed the starting point of our analysis: while a smart choice for $\mathfrak U$ was made in~\cite[\S~3]{AmiCoh18} that applies to all metric graphs, we take a closer look at the class of metric graphs whose underlying combinatorial graph $\mG$ admits a \emph{cycle double cover}. For our purposes, such (combinatorial) cycle double covers induce canonically a (metric) double cover $\mathfrak U$. To illustrate our idea and compare it with the original one in~\cite{AmiCoh18}, we thus focus on two prototypical classes of graphs: regular polyhedra and pumpkin chains in Sections~\ref{sec:cdc} and~\ref{sec:pumpk}, respectively. We show by means of numerous examples that spectral estimates based on the Amini--Cohen-Steiner approach (both in the original and in our own version) can often outperform other methods based either on the isoperimetric inequality
 \begin{equation}\label{eq:fried}
 \lambda_k(\Graph) \ge \frac{\pi^2 k^2}{4L^2},\qquad k\ge 2,
 \end{equation}
\cite{Nic87,Fri05} and its refinements in~\cite{Sol02,KurNab14,BanLev17,BerKenKur17} or on surgery methods \cite{KenKurMal16,Ken20,BerKenKur19}.

\section{General Setting and the main result}\label{sec:gen-set}

All spectral inequalities quoted in the introduction aim at estimating the spectral gap $\lambda_2(\Graph)$ of a metric graph $\Graph$ in dependence of one or more metric or combinatorial quantities of $\Graph$: i.e., if only partial knowledge of $\Graph$ is available. The classical result discovered by von Below in~\cite{Bel85} is conceptually different in that it does not deliver estimates on $\lambda_2(\Graph)$, but rather it allows to determine the spectral gap $\lambda_2(\Graph)$ of the metric graph Laplacian with natural boundary conditions if the spectral gap of the normalized Laplacian on the underlying combinatorial graph is given; and more generally, to determine the whole spectrum of the standard Laplacian on $\Graph$, provided perfect knowledge of $\Graph$ is available.

Very recently, a new method based on a similar but different transference principle has been proposed by Amini and Cohen-Steiner in~\cite{AmiCoh18}: unlike von Below's formula, it applies to possibly non-equilateral graphs, but at the price of a comparatively rough estimate.
Their result is based on the introduction of a double cover $\mathfrak U$ of the metric graph $\Graph$ and the associated vicinity graph $\Gamma$. However, their results can be generalized to a larger class of covers of \(\Graph\): to this purpose, we need to introduce a few notions.

\subsection{$m$-fold covers of metric graphs}

Beyond the standing finiteness and compactness assumptions~\eqref{eq:comp}, for technical reasons we also impose that the metric graph $\Graph$ has no loops. This is hardly restrictive, since, after inserting dummy vertices, we can always produce a new, loopless metric graph $\Graph'$ with same topology and same spectrum as $\Graph$. Parallel edges are  allowed, though.

\begin{defi}\label{def:mfold-cover}
Let \(m\in\N\). An \emph{\(m\)-fold cover} of a metric graph $\Graph$ is a finite family $\mathfrak U:=(\mathcal  {\mathcal U}_i)_{1\le i\le k}$ of connected metric subgraphs of $\Graph$ such that for almost every $x\in \Graph$ there exist \(m\) distinct indices $1\leq i_1<\ldots<i_m\leq k$ such that\ $x\in \mathcal U_{i_1}\cap \ldots\cap\mathcal U_{i_m}$ and $x\not\in\mathcal  {\mathcal U}_i$ for \(i\notin\{i_1,\ldots,i_m\}\).

 The \emph{associated vicinity graph} $\Gamma$ is a simple, unoriented weighted graph with vertex set \(\{1,\ldots,k\}\) and edge weights \(\mu_{ij}:=|\mathcal  {\mathcal U}_i\cap \mathcal U_j|\) for vertices \(i\neq j\) and \(\mu_{ii}=0\); in particular two vertices $i\neq j$ are adjacent in \(\Gamma\) if and only if $\mathcal  {\mathcal U}_i\cap \mathcal U_j$ is not a null set with respect to the Lebesgue measure on \(\Graph\).
\end{defi}  

\begin{rem}
	Since \(\mathfrak U\) is an \(m\)-fold cover of \(\mathcal{G}\) the \emph{weighted degree} of a vertex \(i\) of \(\Gamma\) is
	\begin{equation}\label{eq:weighted-degree}
		d_i^\mu:=\sum_{j=1}^k\mu_{ij}=\sum_{\substack{j=1\\j\neq i}}^k|{\mathcal U}_i\cap \mathcal U_j|=(m-1)|{\mathcal U}_i|.
	\end{equation}
	Moreover, the \emph{total volume} of \(\Gamma\) with respect to the vertex degree weight is
	\begin{equation}		
		\Vol_\mu(\Gamma):=\sum_{i=1}^k d_i^\mu=(m-1)\sum_{i=1}^k|{\mathcal U}_i|=m(m-1)L.
	\end{equation}
\end{rem}

\subsection{Normalized Laplacians of weighted graphs}

Given a simple (unoriented) weighted graph $G$ with edge weights \(\mu_\me\), let $M$ be the diagonal matrix whose entries are the edge weights of $G$: we can then introduce the (weighted) normalized Laplacian
\begin{equation}\label{eq:def-normlapl}
\Lambda:=D^{-1}\mathcal I M \mathcal I^T,
\end{equation}
 on $G$, where $\mathcal I$ is the signed incidence matrix of an arbitrary orientation of $G$ (cf.~\cite[\S~2.1]{Mug14}) and $D$ is the  diagonal matrix whose entries are the weighted vertex degrees $d_\mv$ defined for each vertex $\mv$ by
 \[
 d_\mv:=\sum_{\me\sim\mv}\mu_\me.
 \]
A different, popular linear operator on the vertex space of a graph is
\begin{equation}\label{eq:def-symnormlapl}
	\Lambda_{\sym}:=D^{-\frac{1}{2}}\mathcal I M \mathcal I^TD^{-\frac{1}{2}},
\end{equation}
which is often referred to as the \emph{symmetric normalized Laplacian}. Because \(\Lambda\) and \(\Lambda_{\sym}\) are unitarily equivalent, they have the same -- real -- eigenvalues: we denote them by $\alpha_i(G)$, with
\[
0=\alpha_1(G)\leq\alpha_2(G)\le \ldots \le \alpha_V(G)\le 2.
\]

In the following we are also going to consider graphs with parallel edges; for this purpose, we need to generalize the above notion. If $G$ is a weighted graph  with parallel edges, let us consider its \textit{reduced graph}, i.e., the simple weighted graph $\tilde{G}$ defined as follows:
\begin{itemize}
\item $\tilde{G}$ has  same vertex set as $G$;
\item given any two adjacent vertices $\mv,\mw$ in $G$, $\mv,\mw$ are adjacent in $\tilde{G}$, too, and the weight of the corresponding edge is the sum of weights of all edges connecting $\mv,\mw$ in $G$.
\end{itemize}
Then, the normalized Laplacian of a general weighted, loopless graph $G$ is defined as the normalized Laplacian of the reduced graph $\tilde{G}$; again, we denote its eigenvalues by $\alpha_i(G)$, in increasing order. Note that \(G\) is connected, if and only if \(\alpha_2(G)>0\).

\subsection{The Amini--Cohen-Steiner Theorem}

Let us present the main result in this section.

\begin{theo}\label{thm:amicoh-basic}
Let $\Graph$ be a metric graph. Given an \(m\)-fold cover $\mathfrak U:=(\mathcal  {\mathcal U}_i)_{1\le i\le k}$ of $\Graph$ with associated vicinity graph $\Gamma$, the $k$ lowest eigenvalues of the Laplacian with natural conditions on $\Graph$ satisfy
\begin{equation}\label{eq:amicoh}
\lambda_i(\Graph)\ge \frac{m-1}{m}\eta\, \alpha_i(\Gamma),\qquad i=1,\ldots,k,
\end{equation}
where $\eta$ is the minimal spectral gap of the Laplacian with natural conditions defined on any $\mathcal U_j$, $j=1,\ldots,k$, i.e.,
\[
\eta:=\min_{1\le i\le k}\lambda_2(\mathcal {\mathcal U}_i).
\]
\end{theo}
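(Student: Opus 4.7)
The plan is to apply the Courant--Fischer min-max principle to both $\lambda_i(\Graph)$ and $\alpha_i(\Gamma)$ simultaneously, mediated by the averaging map
$$P : H^1(\Graph) \to \R^k, \qquad (Pf)_j := \bar f_j := \frac{1}{|\mathcal U_j|}\int_{\mathcal U_j} f\,\mathrm{d}x,$$
which transfers continuous test functions on $\Graph$ into vertex vectors on $\Gamma$. Concretely, I would take $V := \Span(\phi_1,\ldots,\phi_i)$ to be the span of the first $i$ eigenfunctions of the standard Laplacian on $\Graph$, set $W := P(V) \subseteq \R^k$, and compare Rayleigh quotients via min-max.

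First I would collect the basic consequences of the $m$-fold cover condition. Because almost every $x \in \Graph$ lies in exactly $m$ of the sets $\mathcal U_j$, any integrable $h$ on $\Graph$ satisfies $\sum_j \int_{\mathcal U_j}h\,\mathrm{d}x = m\int_\Graph h\,\mathrm{d}x$; combined with the Pythagoras identity on each piece this produces
$$\sum_{j=1}^k |\mathcal U_j|\bar f_j^2 = m\|f\|_{L^2(\Graph)}^2 - \sum_{j=1}^k \int_{\mathcal U_j}(f-\bar f_j)^2\,\mathrm{d}x,$$
while \eqref{eq:weighted-degree} gives $\|Pf\|_D^2 := \sum_j d_j^\mu \bar f_j^2 = (m-1)\sum_j |\mathcal U_j|\bar f_j^2$. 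Next, because $\bar f_j$ is precisely the $L^2(\mathcal U_j)$-orthogonal projection of $f|_{\mathcal U_j}$ onto the kernel of the standard Laplacian on $\mathcal U_j$, the Poincaré inequality on each piece reads $\int_{\mathcal U_j}(f-\bar f_j)^2 \le \lambda_2(\mathcal U_j)^{-1}\int_{\mathcal U_j}|f'|^2 \le \eta^{-1}\int_{\mathcal U_j}|f'|^2$, and summation produces $\sum_j \int_{\mathcal U_j}(f-\bar f_j)^2 \le (m/\eta)\|f'\|_{L^2(\Graph)}^2$.

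Then I would bound the discrete Dirichlet energy $\mathcal E_\Gamma(Pf) := \sum_{j<l}\mu_{jl}(\bar f_j - \bar f_l)^2$ in terms of the same $L^2$-oscillation quantity. The cleanest route is to rewrite
$$\mathcal E_\Gamma(Pf) = m\sum_{j}|\mathcal U_j|\bar f_j^2 - \int_\Graph S(x)^2\,\mathrm{d}x, \qquad S(x) := \sum_{j:\, x \in \mathcal U_j}\bar f_j,$$
and to use the trivial $L^2$-inequality $\int_\Graph(S(x) - mf(x))^2\,\mathrm{d}x \ge 0$, which rearranges to the clean estimate $\mathcal E_\Gamma(Pf) \le m\sum_j \int_{\mathcal U_j}(f - \bar f_j)^2\,\mathrm{d}x$. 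Provided $P|_V$ is injective, $W$ is $i$-dimensional and the min-max characterization of $\alpha_i(\Gamma)$ applied to the worst-case $f \in V$ (which satisfies $\|f'\|^2 \le \lambda_i(\Graph)\|f\|^2$) transforms the above estimates, after substitution into the ratio $\mathcal E_\Gamma(Pf)/\|Pf\|_D^2$, into the desired inequality upon rearrangement.

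The principal obstacle is the degenerate case in which $P|_V$ fails to be injective, so that $\dim W < i$ and min-max does not apply directly. Then any $0 \ne f \in V \cap \ker P$ has $\bar f_j = 0$ for every $j$, whence the same Poincaré estimate forces $\lambda_i(\Graph) \ge \eta$; combined with the universal bound $\alpha_i(\Gamma) \le 2$ this already closes the case $m = 2$, and for larger $m$ a perturbation argument approximating $\mathfrak U$ by covers for which $P|_V$ becomes injective, together with continuity of the spectra in the cover data, should complete the picture. Extracting the sharp constant $(m-1)/m$ rather than a weaker one is the other delicate step and hinges on the algebraic rewriting of $\mathcal E_\Gamma(Pf)$ above, which gains a factor of two over the naive $(\bar f_j - \bar f_l)^2 \le 2((f-\bar f_j)^2 + (f - \bar f_l)^2)$ bound applied to pairs of averages.
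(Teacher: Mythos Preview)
Your identities are correct and the $L^2$-trick $\int_\Graph(S-mf)^2\ge 0$ is exactly the right algebraic input, but the final ``substitution and rearrangement'' does \emph{not} produce the constant $(m-1)/m$. Writing $Q:=\sum_j\int_{\mathcal U_j}(f-\bar f_j)^2$ and $q:=Q/\|f\|_{L^2}^2$, your estimates give
\[
\frac{\mathcal E_\Gamma(Pf)}{\|Pf\|_D^2}\;\le\;\frac{mQ}{(m-1)(m\|f\|^2-Q)}=\frac{mq}{(m-1)(m-q)},
\qquad q\le \frac{m}{\eta}\,\frac{\|f'\|^2}{\|f\|^2}\le \frac{m\lambda_i}{\eta}.
\]
Since $q\mapsto \frac{mq}{(m-1)(m-q)}$ is increasing, the best you can conclude from min--max on $W=P(V)$ is
\[
\alpha_i(\Gamma)\le \frac{m\lambda_i/\eta}{(m-1)(1-\lambda_i/\eta)}
\quad\Longrightarrow\quad
\lambda_i(\Graph)\ge \frac{(m-1)\,\eta\,\alpha_i(\Gamma)}{m+(m-1)\alpha_i(\Gamma)},
\]
which is strictly weaker than \eqref{eq:amicoh} whenever $\alpha_i(\Gamma)>0$. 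The loss is structural: by pushing forward to $\R^k$ you are forced to use $\|Pf\|_D^2=(m-1)(m\|f\|^2-Q)$ as denominator, and this can be much smaller than $m(m-1)\|f\|^2$.

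The paper avoids this by running the min--max on $L^2(\Graph)$ rather than on $\R^k$. With $\Phi f:=(|\mathcal U_j|^{1/2}\bar f_j)_j$ one has $m\|f\|^2-\|\Phi f\|^2=Q$, and the Poincar\'e step gives directly
\[
\frac{\langle (mI-\Phi^*\Phi)f,f\rangle}{\|f\|^2}
=\frac{m\|f\|^2-\|\Phi f\|^2}{\|f\|^2}\le \frac{m}{\eta}\,\frac{\|f'\|^2}{\|f\|^2}
\]
for every $f\in H^1(\Graph)$. Testing with $V=\Span(\phi_1,\dots,\phi_i)$ (no push-forward, hence no injectivity issue) yields $\alpha_i(mI-\Phi^*\Phi)\le \frac{m}{\eta}\lambda_i(\Graph)$. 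The crucial observation is then purely linear-algebraic: $\Phi^*\Phi$ and $\Phi\Phi^*$ share nonzero eigenvalues, so the $i$-th eigenvalue of $mI-\Phi^*\Phi$ on $L^2(\Graph)$ equals the $i$-th eigenvalue of the $k\times k$ matrix $mI-\Phi\Phi^*$ for $i\le k$; and one computes $mI-\Phi\Phi^*=(m-1)\Lambda_{\sym}(\Gamma)$. This delivers $(m-1)/m$ exactly and makes your entire discussion of the degenerate case (and the vague ``perturbation argument'' for $m>2$) unnecessary.
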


(Observe that $\eta>0$, since all $\mathcal {\mathcal U}_i$ are supposed to be connected.)

Theorem~\ref{thm:amicoh-basic} generalizes~\cite[Thm.~1.2]{AmiCoh18} from double covers (\(m=2\)) to \(m\)-fold covers. 

\begin{proof}
	Since the proof uses similar arguments as the one given in~\cite{AmiCoh18} for double covers, we restrict ourselves to the main arguments. We consider the linear bounded operator \(\Phi:L^2(\Graph)\rightarrow \R^k\) given by
		\[\left(\Phi f\right)_i :=\frac{1}{\sqrt{|{\mathcal U}_i|}}\int_{{\mathcal U}_i}f\diff x,\qquad i=1,\ldots k\]
	and let \(\Phi^*\) denote its adjoint operator. Using the Courant--Fischer Theorem for the respective eigenvalues and using the fact that \(\mathfrak U\) is an \(m\)-fold cover of \(\Graph\) one can show that
		\[\lambda_i(\mathcal{G})\geq \frac{\eta}{m}\alpha_i(mI-\Phi\Phi^*), \qquad i=1,\ldots k\]
	where \(I\) is the identity on \(\R^k\) and \(\alpha_i(mI-\Phi\Phi^*)\) denotes the \(i\)-th-lowest eigenvalue of the operator \(mI-\Phi\Phi^*\). The entries of \(\Phi\Phi^*\) with respect to Cartesian coordinates on \(\R^k\) are
		\[\left(\Phi\Phi^*\right)_{ij}=\frac{|{\mathcal U}_i\cap \mathcal U_j|}{\sqrt{|{\mathcal U}_i|\,|\mathcal U_j|}}\]
	and, thus, \(\left(mI-\Phi\Phi^*\right)_{ii}=m-1\) and
		\[\left(mI-\Phi\Phi^*\right)_{ij}=-\frac{|{\mathcal U}_i\cap \mathcal U_j|}{\sqrt{|{\mathcal U}_i|\,|\mathcal U_j|}}=-(m-1)\frac{\mu_{ij}}{\sqrt{d_i^\mu d_j^\mu}}\]
	for \(i\neq j\), where we used \eqref{eq:weighted-degree} in the second step. Therefore, the entries of \(mI-\Phi\Phi^*\) are in fact equal to \(m-1\) times the entries of the symmetric normalized Laplacian \eqref{eq:def-symnormlapl} on \(\Gamma\) and we obtain
		\[\lambda_i(\mathcal{G})\geq \frac{\eta}{m}\alpha_i(mI-\Phi\Phi^*) = \frac{m-1}{m}\eta\, \alpha_i(\Gamma),\]
	 for \(i=1,\ldots k\). This proves the claim.
\end{proof}
\begin{rem}\label{rem:tautol}
The factor $\frac{m-1}{m}$ cannot be improved: if the double cover of $\Graph$ consists of \(m\) identical copies $\mathcal U_1,\cdots,\mathcal U_m$ of $\Graph$, then \(\Gamma\) is the -- unweighted -- complete graph on \(m\) vertices and, thus, $\alpha_2(\Gamma)=\frac{m}{m-1}$ (cf.~\cite[Exa.~1.1]{Chu97}), so the inequality in~\eqref{eq:amicoh} is in fact an equality for \(i=2\).
\end{rem}

Of course, \eqref{eq:amicoh} can be improved by taking the supremum over all possible \(m\)-fold covers of $\Graph$. Intuitively, taking smaller $\mathcal  {\mathcal U}_i$'s leads to higher $\eta$ but lower $\alpha_i$, since the vicinity graph tends to get sparser and hence to have poorer connectivity.
It seems that this trade-off is not easy to optimize, even just for $i=2$: $\eta$ can be estimated owing to the known inequalities
\begin{equation}\label{eq:nicaise-kkmm}
\frac{\pi^2}{|\mathcal U|^2}\le \lambda_2(\mathcal U)\le \frac{\pi^2 E_{\mathcal U}^2}{|\mathcal U|^2},
\end{equation}
cf.~\cite{Nic87,KenKurMal16}; here $E_{\mathcal U},E_\Gamma$ are the number of edges of $\mathcal U,\Gamma$, respectively. But little is known about $\alpha_2(\Gamma)$ apart from 
\begin{equation}\label{eq:bounds-normlapl}
\max\left\{\frac{h^2_\Gamma}{2}, \frac{4}{D_{\mu^{-1}}(\Gamma) \Vol_{\mu}(\Gamma)}\right\}\le \alpha_2(\Gamma)\le 2h_\Gamma,
\end{equation}
where $h_\Gamma$, $\Vol_{\mu}(\Gamma)$, and $D_{\mu^{-1}}(\Gamma)$ are the Cheeger constant, the total weight of the weighted graph $\Gamma$ with respect to the vertex degree weight based on the edge weights, and the diameter of \(\Gamma\) with respect to the path metric on \(\Gamma\) induced by the inverse edge weights \(\mu_{ij}^{-1}\), respectively; see~\cite[\S~1.3 and \S~2.3]{Chu97} for unweighted versions of these results and~\cite[Thm.~3.6]{BauKelWoj15} and~\cite[Cor.~3.7]{LenSchSto18} for general versions. (But see~\cite{BerKenKur17,BerKenKur19} for sharper estimates whenever $\mathcal U$ or $\Gamma$ have higher connectivity.)

In~\cite{AmiCoh18} Amini and Cohen-Steiner choose to work with \emph{star double covers}: a star double cover is  a double cover consisting of $V$ stars \(\mathcal S_{\mv_j}\), each centered at a different vertex $\mv_j$ and consisting of all edges incident in $\mv_j$, $j=1,\ldots,V$ (where $V$ is the number of vertices of $\Graph$). Not only is this choice particularly natural because the resulting weighted vicinity graph $\Gamma$ has the same topology as the underlying combinatorial graph \(\mG\) of \(\Graph\) and the edge weight \(\mu_{ij}\) is equal to the length of the edge connecting the vertices \(\mv_i\) and \(\mv_j\) in \(\Graph\); we will show in several examples that it is also surprisingly efficient. Applying Theorem~\ref{thm:amicoh-basic} to star double covers it was proved in~\cite[Thm.~3.4]{AmiCoh18} that
\begin{equation}\label{eq:amicoh-0}
\lambda_i(\Graph)\ge \frac{\pi^2}{8\ell_{\max}^2}\alpha_i(\mG),\qquad i=1,\ldots,V,
\end{equation}
where $\ell_{\max}$ is the maximal length of any edge in $\Graph$. This estimate can be improved by using different lower estimates for the spectral gaps of the single stars: Nicaise' inequality \eqref{eq:nicaise-kkmm} and \cite[Thm. 1.1]{Ken20} imply that 
	\begin{align*}
		\lambda_2(\mathcal S_{\mv_j}) & \geq \frac{\pi^2}{\mathrm{deg}_{\ell,j}^2}, 
		& \lambda_2(\mathcal S_{\mv_j}) & \geq \frac{1}{D_j\cdot\mathrm{deg}_{\ell,j}}
	\end{align*}
for \(j=1,\ldots,V\) where
\(\mathrm{deg}_{\ell,j}  :=|\mathcal S_{\mv_j}|\) is the weighted degree of the vertex \(\mv_j\) and  \(D_j :=\mathrm{diam}(\mathcal S_{\mv_j})\) is the diameter of the star \(\mathcal S_{\mv_j}\), i.e., the total length of all edges incident in $\mv_j$ and the combined length of the two longest edges incident in $\mv_j$, respectively.
Applying Theorem \ref{thm:amicoh-basic} yields:

\begin{prop}\label{prop:aminicoh}
Let $\Graph$ be a metric graph with underlying weighted combinatorial graph $\mG$. Then the eigenvalues $\lambda_i(\Graph)$ of the Laplacian with natural conditions on $\Graph$ satisfy
\begin{equation}\label{eq:amicoh-2}
\lambda_i(\Graph)\ge \max\left(\frac{\pi^2}{8\ell_{\max}^2},\frac{\pi^2}{2\mathrm{deg}_{\ell,\mathrm{max}}^2},\frac{1}{2(D\cdot\mathrm{deg}_\ell)_{\max}}\right)\alpha_i(\mG),\qquad i=1,\ldots,V,
\end{equation}
where \(\ell_{\max}\) is the maximal length of the edges of \(\mathcal G\), $\mathrm{deg}_{\ell,\mathrm{max}}$ is the maximal weighted degree of the vertices of \(\mG\) with respect to the edge lengths of $\Graph$ and
	\[(D\cdot\deg)_{\max}:=\max\left\{D_j\cdot\deg_{\ell,j}~|~j=1,\ldots,V\right\}.\] 
\end{prop}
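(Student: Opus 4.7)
The plan is to apply Theorem~\ref{thm:amicoh-basic} with \(m=2\) to the \emph{star double cover} \(\mathfrak U:=(\mathcal S_{\mv_1},\ldots,\mathcal S_{\mv_V})\) introduced just before the statement, where \(\mathcal S_{\mv_j}\) is the star formed by all edges of \(\Graph\) incident in \(\mv_j\). First I would verify that \(\mathfrak U\) fits Definition~\ref{def:mfold-cover} with \(m=2\): the vertex set of \(\Graph\) has Lebesgue measure zero, and every point in the interior of an edge connecting \(\mv_i\) and \(\mv_j\) lies in precisely the two stars \(\mathcal S_{\mv_i}\) and \(\mathcal S_{\mv_j}\), and in no others.

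Next I would identify the associated vicinity graph \(\Gamma\) with the weighted combinatorial graph \(\mG\). By construction the weight \(\mu_{ij}=|\mathcal S_{\mv_i}\cap \mathcal S_{\mv_j}|\) equals the combined length of all edges joining \(\mv_i\) and \(\mv_j\) in \(\Graph\); after passing to the reduced graph, as prescribed for the normalized Laplacian in the presence of parallel edges, the edge weights of \(\Gamma\) coincide with those of \(\mG\). Therefore \(\alpha_i(\Gamma)=\alpha_i(\mG)\) for all \(i=1,\ldots,V\), and inserting the factor \(\frac{m-1}{m}=\frac{1}{2}\) into~\eqref{eq:amicoh} produces
\[
\lambda_i(\Graph)\ge \tfrac{1}{2}\,\eta\,\alpha_i(\mG),\qquad i=1,\ldots,V,\quad \eta:=\min_{1\le j\le V}\lambda_2(\mathcal S_{\mv_j}).
\]

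It then remains to bound \(\eta\) from below in three different ways, using the spectral-gap estimates for stars collected in the paragraph preceding the statement. The inequality \(\lambda_2(\mathcal S_{\mv_j})\ge \tfrac{\pi^2}{4\ell_{\max}^2}\) from~\cite{AmiCoh18} supplies the first term \(\tfrac{\pi^2}{8\ell_{\max}^2}\,\alpha_i(\mG)\); the lower bound in~\eqref{eq:nicaise-kkmm} applied to each star yields \(\lambda_2(\mathcal S_{\mv_j})\ge \tfrac{\pi^2}{|\mathcal S_{\mv_j}|^2}=\tfrac{\pi^2}{\mathrm{deg}_{\ell,j}^2}\), whence the second term \(\tfrac{\pi^2}{2\mathrm{deg}_{\ell,\max}^2}\,\alpha_i(\mG)\); and the estimate \(\lambda_2(\mathcal S_{\mv_j})\ge \tfrac{1}{D_j\,\mathrm{deg}_{\ell,j}}\) from~\cite{Ken20} yields the third term \(\tfrac{1}{2(D\cdot\mathrm{deg}_\ell)_{\max}}\,\alpha_i(\mG)\). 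Taking the maximum over these three lower bounds gives~\eqref{eq:amicoh-2}. The argument is a direct assembly of ingredients already prepared in the surrounding text, and the only point requiring some care is the handling of parallel edges via the reduced graph when identifying \(\Gamma\) with \(\mG\); no deeper obstacle is expected.
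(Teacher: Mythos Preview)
Your proposal is correct and follows precisely the approach taken in the paper: apply Theorem~\ref{thm:amicoh-basic} with \(m=2\) to the star double cover, identify the vicinity graph with \(\mG\), and then bound \(\eta=\min_j\lambda_2(\mathcal S_{\mv_j})\) from below using the three star estimates (the \(\pi^2/(4\ell_{\max}^2)\) bound from~\cite{AmiCoh18}, Nicaise' inequality~\eqref{eq:nicaise-kkmm}, and Kennedy's diameter bound~\cite{Ken20}) already assembled in the text preceding the proposition. Your added remarks on verifying the \(2\)-fold cover property and handling parallel edges via the reduced graph are exactly the points the paper addresses in the remark following the proposition.
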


\begin{rem}
1) Actually~\cite[Thm.~1.2]{AmiCoh18} is only formulated for simple metric graphs, but the proof of Theorem~\ref{thm:amicoh-basic} shows that the assertion remains true if $\mathcal G$ and its subgraphs $\mathcal U_i$ have parallel edges; in this case, it is natural to generalize the notion of \emph{stars} to that of \emph{pumpkin stars}, and accordingly consider pumpkin star double covers rather than star covers.

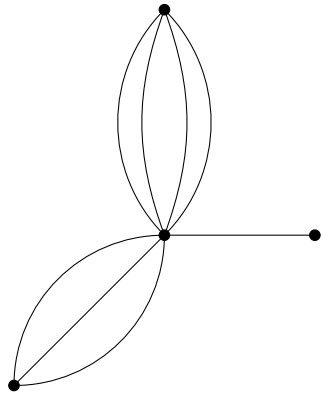
\begin{figure}[H]
\begin{tikzpicture}[scale=1]
\coordinate (a) at (0,0);
\coordinate (b) at (-2,-2);
\coordinate (c) at (2,0);
\coordinate (d) at (0,3);
\draw[fill] (a) circle (2pt);
\draw[fill] (b) circle (2pt);
\draw[fill] (c) circle (2pt);
\draw[fill] (d) circle (2pt);
\draw (a) -- (b);
\draw[bend left=45] (a) edge (b);
\draw[bend right=45] (a) edge (b);
\draw (a) edge (c);
\draw[bend left=45] (a) edge (d);
\draw[bend right=45] (a) edge (d);
\draw[bend left=20] (a) edge (d);
\draw[bend right=20] (a) edge (d);
\end{tikzpicture}
\vspace{5pt}
\caption{ A pumpkin star}
\label{fig:shifting-pumpkins-a}
\end{figure}

2) Formally, \eqref{eq:amicoh} and \eqref{eq:amicoh-2} hold with $\alpha_i(\mG)$ replaced by $\alpha_i(\Gamma)$; but for (pumpkin) star double covers, $\mG$ (or its associated reduced graph, if $\mG$ has parallel edges) and the vicinity graph $\Gamma$ coincide.
\end{rem}

We will show in several examples that the estimate~\eqref{eq:amicoh-2} is often very good in comparison with other known ones. 
Perhaps more interesting even is the flexibility offered by the idea behind Theorem~\ref{thm:amicoh-basic}: instead of the double cover based on stars -- which always exists -- we are going to focus on special classes of graphs that admit a specific, different double cover: the elements of $\mathfrak U$ will not be stars, but rather cycles.

\section{Cycle double covers and regular polyhedra}\label{sec:cdc}


Double covers whose elements are cycles within the graphs are a classical topic of (combinatorial) graph theory.
Not all (metric) graphs admit such double covers: obviously, a necessary condition is that the graph contains no bridges (i.e., it is not simply connected).
Many authors conjectured independently that this condition is sufficient, too. 

\begin{conj}[Cycle double cover conjecture]
Every bridgeless graph 
 has a cycle double cover.
\end{conj}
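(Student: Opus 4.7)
The statement is the celebrated Cycle Double Cover Conjecture of Szekeres and Seymour, which has resisted proof since the 1970s, so I should be candid that what follows is a strategy outline rather than a realistic path to a full proof. My overall plan is to pass to a minimal counterexample, reduce its structure until only the essential hard case remains, and then attempt to leverage known sufficient conditions on that case.

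First, I would try to reduce to cubic graphs. If $\mG$ is a minimal bridgeless counterexample and contains a vertex of degree $\ge 4$, I would split or suppress edges in a way that preserves bridgelessness and inherits a cycle double cover upward; this should allow me to assume $\mG$ is $3$-regular. Second, I would dispose of graphs admitting a proper $3$-edge-colouring: three colour classes partition the edge set into three perfect matchings, each pair of which forms a disjoint union of even cycles (a $2$-factor), and the three resulting $2$-factors together cover every edge exactly twice. Thus the minimal counterexample may be assumed to be a \emph{snark}, i.e.\ a bridgeless cubic graph with chromatic index $4$. This reduction, essentially due to Jaeger, is the uncontroversial part.

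The hard step is then dealing with snarks. Here I would invoke Jaeger's theorem, which guarantees a cycle double cover (indeed an oriented one) for any graph admitting a nowhere-zero $4$-flow; by the work of Robertson--Sanders--Seymour--Thomas on Tutte's $4$-flow conjecture, this disposes of all bridgeless graphs without a Petersen minor. The remaining case is thus snarks with a Petersen minor. On these my plan would be twofold: (i) an inductive approach, choosing a short cycle or a small edge cut in the snark, deleting or contracting it, and attempting to reconstruct a cycle double cover of the original graph from one on the smaller pieces; (ii) a topological approach, embedding the snark in an orientable surface of low genus as a strong embedding, so that the face boundaries furnish the required family of cycles.

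The main obstacle, unsurprisingly, is precisely this last step. There is no known reduction on snarks that produces a strictly smaller bridgeless cubic graph whose cycle double cover lifts back in a controlled way, and the existence of strong embeddings is itself a conjecture (the Strong Embedding Conjecture) strictly implying the one under consideration. Accordingly, I expect any honest attempt along these lines to stall at the irreducibility of snarks beyond Petersen-minor-free instances; what I would realistically aim to extract from the plan is not the full conjecture but rather quantitative partial results, such as cycle double covers for metric graphs whose underlying combinatorial skeleton admits a nowhere-zero $4$-flow, which is already enough for the applications to Theorem~\ref{thm:amicoh-basic} pursued in the sequel.
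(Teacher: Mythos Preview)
Your proposal is honest and well-informed, but there is nothing to compare it against: the paper does not prove this statement. It is stated as a \emph{conjecture} (note the \texttt{conj} environment), and the very next paragraph says explicitly that ``the cycle double cover conjecture is among the major open problems of graph theory and we will not comment on it.'' The paper merely lists classes of graphs for which the conjecture is known to hold (planar, complete, Hamiltonian-path, cubic $3$-edge-colourable, $4$-edge-colourable) and then uses cycle double covers as a \emph{tool} in the spectral estimates that follow, without ever attempting to establish the conjecture itself.

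You correctly recognise this situation in your opening sentence, and your outline of the standard reductions (to cubic graphs, then to snarks via Vizing/Jaeger, then the nowhere-zero $4$-flow route disposing of Petersen-minor-free graphs) is an accurate summary of the state of the art. But as you yourself concede, the final step---handling snarks with a Petersen minor via either an inductive reduction or a strong embedding---is precisely where every known approach stalls, and invoking the Strong Embedding Conjecture would be assuming something strictly stronger than what you are trying to prove. So what you have written is a competent survey of partial results and obstructions, not a proof; and that is exactly the appropriate response here, since no proof exists and the paper does not pretend otherwise.
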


The cycle double cover conjecture is among the major open problems of graph theory and we will not comment on it: we only refer to the monograph~\cite{Zha12} and the survey~\cite{Jae85} and mention that the conjecture is especially known to hold in the following cases, under the additional assumption that $\mG$ is bridgeless:
\begin{itemize}
\item $\mG$ is planar (i.e., it can be drawn in $\R^2$ without any edge crossings);
\item $\mG$ is complete (i.e., any two vertices are adjacent if and only if they are not equal);
\item $\mG$ contains a Hamiltonian path (i.e., there is a path contained in $\mG$ such that traverses every vertex exactly once);
\item $\mG$ is cubic and 3-edge colorable (i.e., each vertex has exactly three incident edges and there is a way of assigning to  each edge one out of three given colors in such a way that any  two adjacent edges have different colors);
\item $\mG$ is 4-edge colorable.
\end{itemize}

It turns out that this allows for a new family of spectral estimates based on the theory presented in Section~\ref{sec:gen-set}.

\begin{defi}
Let $\Graph$ be a metric graph. A \emph{cycle double cover} of $\Graph$ is a double cover whose elements are cycles within $\Graph$.
\end{defi}

In the following, we are going to discuss estimates on the spectral gap of \emph{planar} quantum graphs. Given an embedding of the graph $\mG$ in the plane \(\R^2\) (or in the unit sphere \(\mathbb S^2\)), so that any two edges do not cross, $\mG$ and hence \(\mathcal{G}\) decompose the plane {(or sphere)} into a finite number of open sets \((\mf_i)_{1\leq i\leq k}\) -- the \emph{faces} of $\Graph$.
Hence, in the case of bridgeless planar graphs, each face $\mf_i$ defines an element \(\mathcal{C}_i\) of a cycle double cover \(\mathfrak C\): let \(\mathcal C_i\) be the cycle enclosing \(\mf_i\). The vicinity graph is a weighted version of what is known as the \textit{dual graph} in (combinatorial) graph theory: we hence denote it by $\mG_d$. The weight of the edge between \(i\neq j\) is given by the total length of the boundary shared by $\mf_i,\mf_j$. 

Because the elements $(\mathcal C_i)_{1\le i\le k}$ of the double cover are cycles, they  satisfy
\begin{equation}\label{eq:lambda2cyc}
\lambda_2(\mathcal C_i)= \frac{4\pi^2}{|\mathcal C_i|^2},\qquad i=1,\ldots,k.
\end{equation}
Applying Theorem~\ref{thm:amicoh-basic} to the elements of the cycle double cover immediately yields the following.

\begin{prop}\label{prop:amicoh-cdc}
Given a planar metric graph $\Graph$ with a cycle double cover $\mathfrak C:=(\mathcal C_i)_{1\le i\le k}$ and associated dual graph $\mG_d$, the $k$ lowest eigenvalues of the Laplacian with natural conditions on $\Graph$ satisfy
\begin{equation}\label{eq:amicoh-d}
\lambda_i(\Graph)\ge 2\pi^2 \alpha_i(\mG_d)\min_{1\le j\le k}\frac{1}{|\mathcal C_j|^2},\qquad i=1,\ldots,k.
\end{equation}
\end{prop}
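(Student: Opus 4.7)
The plan is to apply Theorem~\ref{thm:amicoh-basic} directly to the cycle double cover $\mathfrak C$ with $m=2$, so the proof should be a short verification that all hypotheses of that theorem are met and a computation of the constants involved.

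First, I would check that $\mathfrak C$ is genuinely a $2$-fold cover of $\Graph$ in the sense of Definition~\ref{def:mfold-cover}, and that its associated vicinity graph coincides (as a weighted graph) with the dual graph $\mG_d$. For a bridgeless planar graph embedded in $\R^2$ or $\mathbb S^2$, every edge lies on the boundary of exactly two distinct faces, so almost every $x\in\Graph$ belongs to precisely the two cycles $\mathcal C_i$ that enclose the two faces adjacent to the edge containing $x$; points that are vertices of $\Graph$ form a set of measure zero and can be ignored. By construction, the vicinity graph then has vertex set indexed by the faces and edge weight $\mu_{ij}=|\mathcal C_i\cap \mathcal C_j|$ equal to the total length of the edges shared by $\mf_i$ and $\mf_j$, which is exactly the weighted dual graph $\mG_d$.

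Next I would compute $\eta$. Each element $\mathcal C_j$ of the cover is a metric cycle, hence (as a metric measure space with its standard Laplacian) unitarily equivalent to a circle of length $|\mathcal C_j|$, for which the spectral gap is the well-known value $\frac{4\pi^2}{|\mathcal C_j|^2}$ recorded in~\eqref{eq:lambda2cyc}. Therefore
\[
\eta=\min_{1\le j\le k}\lambda_2(\mathcal C_j)=4\pi^2\min_{1\le j\le k}\frac{1}{|\mathcal C_j|^2}.
\]
Substituting this value of $\eta$, together with the prefactor $\frac{m-1}{m}=\tfrac12$, into the general inequality~\eqref{eq:amicoh} applied to the cover $\mathfrak C$ produces exactly~\eqref{eq:amicoh-d}, for $i=1,\ldots,k$.

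There is no serious obstacle: the proposition is essentially a specialisation of Theorem~\ref{thm:amicoh-basic} to a particularly structured double cover. The only substantive point is the observation that for planar bridgeless $\Graph$ the face-bounding cycles automatically form a double cover whose vicinity graph is the (weighted) dual graph; this rests on the standard planarity fact that each edge borders exactly two faces, and is what makes the cycle double cover conjecture relevant to the framework developed in Section~\ref{sec:gen-set}.
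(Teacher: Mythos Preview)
Your proposal is correct and follows essentially the same argument as the paper: apply Theorem~\ref{thm:amicoh-basic} with $m=2$ to the face-bounding cycle double cover, identify the vicinity graph with the weighted dual graph $\mG_d$, and compute $\eta$ via~\eqref{eq:lambda2cyc}. The paper treats this as an immediate corollary (``Applying Theorem~\ref{thm:amicoh-basic} to the elements of the cycle double cover immediately yields the following''), while you spell out the verification that each edge of a bridgeless planar graph borders exactly two faces; this is a welcome elaboration but not a different approach.
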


As usual, $\alpha_i(\mG_d)$ denotes the $i$-th-lowest eigenvalue of the normalized Laplacian of $\mG_d$.

 It is not too restrictive to assume a metric graph to be bridgeless: indeed, doubling each edge of $\Graph$ yields a new, bridgeless graph $\Graph_2$ whose spectral gap is no larger than the spectral gap of $\Graph$.


Because each metric graph with a double cover also admits the double cover based on stars introduced in~\cite{AmiCoh18}, it is a natural question whether either of these two estimates is consistently better. One may think that the trade-off is apparent: cycle double covers tend to have much higher $\eta$, even though the eigenvalues $\alpha_i$ can be a bit smaller. On the other hand, the topology of $\Gamma$ is more obviously related to that of $\Graph$ if the star double cover is chosen. 

It turns out that the estimate Proposition~\ref{prop:aminicoh} can actually be quite efficient in many different examples. In order to discuss this issue,  we restrict for a while to the case of regular polyhedra.

\begin{exa}
{
Given a polyhedron $\mP$ on $V$ vertices with Schläfli symbol $\{n, m\}$ (i.e., each face is a regular $n$-gon, each vertex has degree $m$) and dual polyhedron $\mP_d$, we consider the \emph{equilateral metric polyhedron} $\mathcal P$ by assigning unitary length to each edge in $\mP$. Then the lower estimate in~\eqref{eq:amicoh-2} becomes
\begin{equation}\label{eq:sdc-schl}
\lambda_i(\mathcal P)\ge \frac{\pi^2}{8}\alpha_i(\mP),\qquad i=1,\ldots,V.
\end{equation}
If instead a cycle double cover based on the $F=\frac{4n}{2m+2n-mn}$ faces of the polyhedron is taken, then we promptly obtain the estimate
\begin{equation}\label{eq:cdc-schl}
\lambda_i(\mathcal P)\ge \frac{2\pi^2}{n^2}\alpha_i(\mP_d),\qquad i=1,\ldots,F.
\end{equation}

Hence,~\eqref{eq:cdc-schl} is sharper than~\eqref{eq:sdc-schl} if and only if 
\begin{equation}\label{eq:16:9}
\alpha_i(\mP)\le \frac{16}{n^2}\alpha_i(\mP_d).
\end{equation}


Let us test these estimates in the case of equilateral metric graphs built upon Platonic solids: they turn out to be a good benchmark for spectral bounds, since the actual eigenvalues can be found by means of von Below's formula \cite{Bel85}.
}

1) In order to apply our construction we consider a metric graph \(\mathcal{P}\) whose underlying combinatorial graph \(\mP\) has the shape of an icosahedron (cf.\ Figure \ref{fig:icosa}), i.e., the Platonic solid with Schläfli symbol \(\{3,5\}\).
	
	\begin{figure}[H]
\begin{minipage}[t]{0.5\textwidth}
\centering
\begin{tikzpicture}[scale=0.65]
\coordinate (v1) at (90:3.7);
\coordinate (v2) at (150:4);
\coordinate (v3) at (210:4);
\coordinate (v4) at (270:3.7);
\coordinate (v5) at (330:4);
\coordinate (v6) at (30:4);

\coordinate (v7) at ($(0,-0.3)+(150:2)$);
\coordinate (v8) at ($(0,-0.3)+(270:2)$);
\coordinate (v9) at ($(0,-0.3)+(30:2)$);

\coordinate (v10) at ($(0,0.3)+(90:2)$);
\coordinate (v11) at ($(0,0.3)+(210:2)$);
\coordinate (v12) at ($(0,0.3)+(330:2)$);

\draw[thick] (v1) -- (v2) -- (v3) -- (v4) -- (v5) -- (v6)-- (v1);
\draw[thick] (v7) -- (v8) -- (v9) -- (v7);
\draw[dotted] (v10) -- (v11) -- (v12) -- (v10);

\draw[thick] (v2) -- (v7) -- (v1)--(v9);
\draw[thick] (v7) -- (v3) -- (v8) -- (v4);
\draw[thick] (v8) -- (v5) -- (v9) -- (v6);

\draw[dotted] (v5) -- (v12) -- (v6) -- (v10) -- (v1);
\draw[dotted] (v10) -- (v2) -- (v11) -- (v3);
\draw[dotted] (v11) -- (v4) -- (v12);

\end{tikzpicture}
\vspace{5pt}
\caption{The icosahedron~...}\label{fig:icosa}
\end{minipage}%
\begin{minipage}[t]{0.5\textwidth}
\centering
\begin{tikzpicture}[scale=1.05]
\coordinate (f1) at (0,0);

\coordinate (f2) at (150:1.5);
\coordinate (f3) at (270:1.5);
\coordinate (f4) at (30:1.5);

\coordinate (f5) at (90:1.5);
\coordinate (f6) at (210:1.5);
\coordinate (f7) at (330:1.5);

\coordinate (a1) at (10:2.4);
\coordinate (a2) at (50:2.4);
\coordinate (a3) at (130:2.4);
\coordinate (a4) at (170:2.4);
\coordinate (a5) at (250:2.4);
\coordinate (a6) at (290:2.4);

\coordinate (b1) at (70:2.4);
\coordinate (b2) at (110:2.4);
\coordinate (b3) at (190:2.4);
\coordinate (b4) at (230:2.4);
\coordinate (b5) at (310:2.4);
\coordinate (b6) at (350:2.4);

\draw[thick] (f1)--(f5);
\draw[thick] (f1)--(f6);
\draw[thick] (f1)--(f7);

\draw[thick] (f7)--(a1)--(a2)--(f5)--(a3)--(a4)--(f6)--(a5)--(a6)--(f7);

\draw[thick] (a5)--(b4)--(b3)--(a4);
\draw[thick] (a3)--(b2)--(b1)--(a2);
\draw[thick] (a1)--(b6)--(b5)--(a6);

\draw[dotted] (f1)--(f2);
\draw[dotted] (f1)--(f3);
\draw[dotted] (f1)--(f4);

\draw[dotted] (b1)--(f4)--(b6);
\draw[dotted] (b5)--(f3)--(b4);
\draw[dotted] (b3)--(f2)--(b2);

\end{tikzpicture}
\vspace{5pt}
\caption{...~and its dual graph -- the dodecahedron}\label{fig:dodeca}
\end{minipage}
\end{figure}

 For simplicity we assume all edges in \(\mathcal{P}\) to have unit length. Then, we can use von Below's formula to calculate the eigenvalues of the Laplacian on \(\mathcal{P}\): first of all, we can compute the eigenvalues of the normalized Laplacian on the icosahedron \(\mP\)  explicitly. For instance, its spectral gap is
		\[\alpha_2(\mP)=\frac{5-\sqrt{5}}{5}\]
	and, therefore, von Below's formula~\cite{Bel85} implies
		\[\lambda_2(\mathcal{P})=\arccos(1-\alpha_2(\mP))^2=\arccos\left(\frac{\sqrt{5}}{5}\right)^2\simeq 1.226.\]
	Again, note that applying von Below's formula would not be possible if \(\mathcal{P}\) were not equilateral -- however, the following calculation can be extended to the non-equilateral case.\\
	We compare the estimates for star double covers and cycle double cover: the dual graph $\mP_d$ of \(\mP\) has the shape of the Platonic solid with Schläfli symbol \(\{5,3\}\) -- the dodecahedron (cf.~Figure \ref{fig:dodeca}). The spectral gap of the normalized Laplacian on the dodecahedron \(\mP_d\) is
		\[\alpha_2(\mP_d)=\frac{3-\sqrt{5}}{3}.\]
	Applying the estimate~\eqref{eq:cdc-schl} for cycle double covers yields the lower bound
		\[\lambda_2(\mathcal P)\geq \frac{2\pi^2}{3^2}\, \alpha_2(\mP_d)=\frac{2\pi^2(3-\sqrt{5})}{27}\simeq 0.558.\]
	In comparison, using the star-double-cover-based estimate in \eqref{eq:sdc-schl} we find
		\[\lambda_2(\mathcal{P})\geq \frac{\pi^2}{8}\alpha_2(\mP)= \frac{\pi^2(5-\sqrt{5})}{40}\simeq 0.682,\]
	whereas the estimates in~~\cite[Thm.~1.1]{Ken20} and \cite[Thm.~2.1]{BanLev17} yield the weaker lower bounds
	\begin{align*}
		\lambda_2(\mathcal{P}) & \geq \frac{1}{90}\simeq 0.011 & \lambda_2(\mathcal{P}) & \geq \frac{\pi^2}{225}\simeq 0.044,
	\end{align*}
	respectively, since the total length and diameter of \(\mathcal{P}\) are \(L=30\) and \(\mathcal D=3\).
	
2)	Let us take a look at higher eigenvalues: the eigenvalues of the icosahedron are
	\[
	0^1,\left( \frac{5-\sqrt{5}}{5}\right)^3, \left( \frac{6}{5}\right)^5,\left( \frac{5+\sqrt{5}}{5}\right)^3.
	\]
	
 On the other hand, the eigenvalues of the dodecahedron are
\[
0^1, \left( \frac{3-\sqrt{5}}{3}\right)^3,\left(\frac{2}{3} \right)^5, 1^4,\left(\frac{5}{3} \right)^4,\left( \frac{3+\sqrt{5}}{3}\right)^3,
\]	
so by~\eqref{eq:16:9} we see that the estimate~\eqref{eq:amicoh-2} based on the star double cover is better for the lowest eight nontrivial eigenvalues; whereas $\frac{5+\sqrt{5}}{5}\simeq 1.447$, thus the remaining eleven estimates yielded by the cycle double cover are better. The last relevant estimate reads
\[
\lambda_{18}(\mathcal P)\ge \frac{ (3+\sqrt{5})\pi^2}{27}\simeq 1.914.
\]
 For comparison: Friedlander's inequality~\eqref{eq:fried} yields
\[
\lambda_{18}(\mathcal P)\ge \frac{ 324\pi^2}{3600}\simeq 0.888.
\]

Conversely, if we take $\mathcal P$ to be the metric graph with the shape of the dodecahedron -- hence $\mP$ is the dodecahedron and $\mP_d$ is the icosahedron --, we see that the estimate based on the cycle double cover is strictly better if and only if
\[
\alpha_i(\mP)\le \frac{16}{25}\alpha_i(\mP_d):
\]
this is seen to be the case for the eight lowest non-trivial eigenvalues, whereas 
\[
1>\frac{16}{25}\left( \frac{5+\sqrt{5}}{5}\right)\simeq 0.926
\]
and hence the estimate
\[
\lambda_9(\mathcal P)\ge \frac{\pi^2}{8}
\]
obtained by means of Proposition~\ref{prop:aminicoh} is better.

3) Similar tests can be performed for the remaining regular polyhedra, too: in the case of the tetrahedron, the self-dual regular Platonic solid with Schläfli symbol \(\{3,3\}\) and eigenvalues
\[
0^1, \left(\frac{4}{3}\right)^3,
\]
the lower estimates on the spectral gap of the quantum graph based on Proposition~\ref{prop:amicoh-cdc} is trivially better than the estimate based on Proposition~\ref{prop:aminicoh}: they yield
\[
\lambda_2(\mathcal P)\ge \frac{8 \pi^2}{27}\quad\hbox{and}\quad \lambda_2(\mathcal P)\ge \frac{\pi^2}{6},
\]
respectively.

Let us finally turn to the Platonic solid with Schläfli symbol \(\{4,3\}\) -- the cube -- and its dual with Schläfli symbol \(\{3,4\}\) -- the octahedron. They have eigenvalues
\[
0^1,\left(\frac{2}{3}\right)^3,\left(\frac{4}{3}\right)^3,2^1\qquad\hbox{and}\qquad 0^1,1^3,\left(\frac{3}{2}\right)^2,
\]
respectively: hence, the estimate on $\lambda_2$ (resp., $\lambda_5$) of the cube based on Proposition~\ref{prop:amicoh-cdc} is better (resp., worse) then that based on Proposition~\ref{prop:aminicoh}. Instead, for the octahedron all estimates based on Proposition~\ref{prop:amicoh-cdc} are better than those based on  Proposition~\ref{prop:aminicoh}.

4) Different cycle double covers are conceivable: one may e.g.\ think of double cover elements defined by \textit{two} adjacent faces. In the case of the tetrahedron this would lead to three double cover elements consisting of diamonds bounded by edges $\me_1,\me_3,\me_4,\me_6$; $\me_2,\me_3,\me_4,\me_6$; and $\me_1,\me_2,\me_4,\me_5$. The corresponding vicinity graph is a cycle of length 3 whose eigenvalues are 
\[
0^1,\left(\frac{3}{2}\right)^2,
\]
whence the lower estimate 
\[
\lambda_2(\mathcal P)\ge \frac{3\pi^2}{16}.
\]
This bound is not as good as the one obtained from ``classical'' cycle double covers in 3).

5) 	As an example for an \(m\)-fold cover of order \(m>2\) let us, again, consider the equilateral graph \(\mathcal P\) corresponding to the cube. We consider the cover \(\mathfrak C\) of cycles that bound the union of two adjacent faces in the cube respectively. There are \(12\) such cycles, each having length \(6\), and \(\mathfrak C\) is a cycle \(6\)-fold cover of \(\mathcal P\). The corresponding vicinity graph \(\Gamma\) is a complete graph where each edge vertex has \(3\) incident edges with weight \(2\) and \(8\) incident edges with weight \(3\). Since the vicinity graph is complete and close to being equilateral we obtain a rather high spectral gap \(\alpha_2(\Gamma)\): namely the eigenvalues are
\[
0^1,\left(\frac{16}{15}\right)^9,\left(\frac{6}{5}\right)^2.
\]
Applying Theorem \ref{thm:amicoh-basic} directly to this cycle \(6\)-fold cover, we obtain the estimates
	\[\lambda_2(\mathcal P)\geq \frac{8\pi^2}{81},\qquad \lambda_{11}(\mathcal P)\geq \frac{\pi^2}{9}.\]
The estimate for \(\lambda_2(\mathcal P)\) is weaker than the one obtained from the cycle double cover corresponding to the dual platonic solid. However, note that \(\alpha_2(\Gamma)\) is higher than \(\alpha_2(\mP_d)\), whereas \(\eta\) is higher for the cycle double cover, since we only consider cycles of length \(4\) in that case.
\end{exa}

\begin{rem}
1) When applying~\eqref{eq:amicoh-d} to a concrete graph $\Graph$ it would be useful to have further information on the eigenvalues of $\mG_d$. Unfortunately, we are not aware of any abstract description of the normalized Laplacian on the dual of planar graphs and, in fact, it is unlikely that such a description is, generally, available at all: for already in the simple case of planar graphs, there is not a unique dual graph. Instead, a planar graph \(\mG\) generally depends on the choice of the embedding of \(\mG\) in the plane. In particular, applying Proposition~\ref{prop:amicoh-cdc} with the cycle double cover \((\mathcal C_i)_{1\leq i\leq k}\) may lead to different estimates depending on the choice of the embedding (cf.~ Figure \ref{fig:graph-diff-emb}). However, note that this situation may not occure when \(\mG\) is planar and \(3\)-connected because in that case \(\mG\) has a unique dual graph by a theorem of Whitney \cite[Theorem 11]{Whi32}.
 
\begin{figure}[H]
\begin{minipage}[t]{0.5\textwidth}
\centering
\begin{tikzpicture}
\coordinate (v1) at (45:2);
\coordinate (v2) at (135:2);
\coordinate (v3) at (225:2);
\coordinate (v4) at (315:2);
\coordinate (v5) at (0:{2*sqrt(2)});
\coordinate (v6) at (180:{2*sqrt(2)});

\coordinate (w1) at (0,0);
\coordinate (w2) at (0:{3/2*sqrt(2)});
\coordinate (w3) at (180:{3/2*sqrt(2)});
\coordinate (w4) at (270:{3/2*sqrt(2)});

\draw[dashed] (w2) -- node[midway,above]{\(1\)} (w1) -- node[midway,above]{\(1\)} (w3);
\draw[dashed] (w1) -- (w4) node[midway,left]{\(2\)};
\draw[dashed, in=0,out=-40] (w2) edge node[midway,below]{\(2\)} (w4); 
\draw[dashed,in=180,out=220] (w3) edge node[midway,below]{\(2\)} (w4);

\draw[thick] (v1) -- (v2) -- (v3) -- (v4) -- (v1);
\draw[thick] (v1) -- (v5) -- (v4);
\draw[thick] (v2) -- (v6) -- (v3); 

\node at (w1) [circle,draw,fill=white,scale=0.6] {};
\node at (w2) [circle,draw,fill=white,scale=0.6] {};
\node at (w3) [circle,draw,fill=white,scale=0.6] {};
\node at (w4) [circle,draw,fill=white,scale=0.6] {};

\node at (v1) [circle,draw,fill=black,scale=0.3] {};
\node at (v2) [circle,draw,fill=black,scale=0.3] {};
\node at (v3) [circle,draw,fill=black,scale=0.3] {};
\node at (v4) [circle,draw,fill=black,scale=0.3] {};
\node at (v5) [circle,draw,fill=black,scale=0.3] {};
\node at (v6) [circle,draw,fill=black,scale=0.3] {};

\end{tikzpicture}
\end{minipage}%
\begin{minipage}[t]{0.5\textwidth}
\centering
\begin{tikzpicture}
\coordinate (v1) at (45:2);
\coordinate (v2) at (135:2);
\coordinate (v3) at (225:2);
\coordinate (v4) at (315:2);
\coordinate (v5) at (0:{2*sqrt(2)});
\coordinate (v6) at (0,0);

\coordinate (w1) at (45:{1/2*sqrt(2)});
\coordinate (w2) at (0:{3/2*sqrt(2)});
\coordinate (w3) at (180:{1/2*sqrt(2)});
\coordinate (w4) at (0,-{3/2*sqrt(2)});

\coordinate (h1) at (-1.7,-{sqrt(2)}); 

\draw[dashed,out = 180,in = 30] (w1) edge node[midway,above] {\(2\)} (w3);
\draw[dashed,in = 0, out = 140] (w2) edge node[pos=0.7,above] {\(1\)} (w1);
\draw[dashed,out=-110,in=90] (w1) edge node[midway,right] {\(2\)} (w4);
\draw[dashed, in=0,out=-40] (w2) edge node[midway,below] {\(2\)} (w4);
\draw[dashed,in=90,out=210] (w3) edge node[pos=0.9,left] {\(1\)} (h1);
\draw[dashed,in=180,out=-90] (h1) edge (w4);

\draw[thick] (v1) -- (v2) -- (v3) -- (v4) -- (v1);
\draw[thick] (v1) -- (v5) -- (v4);
\draw[thick] (v2) -- (v6) -- (v3);

\node at (v1) [circle,draw,fill=black,scale=0.3] {};
\node at (v2) [circle,draw,fill=black,scale=0.3] {};
\node at (v3) [circle,draw,fill=black,scale=0.3] {};
\node at (v4) [circle,draw,fill=black,scale=0.3] {};
\node at (v5) [circle,draw,fill=black,scale=0.3] {};
\node at (v6) [circle,draw,fill=black,scale=0.3] {};


\node at (w1) [circle,draw,fill=white,scale=0.6] {};
\node at (w2) [circle,draw,fill=white,scale=0.6] {};
\node at (w3) [circle,draw,fill=white,scale=0.6] {};
\node at (w4) [circle,draw,fill=white,scale=0.6] {};
\end{tikzpicture}
\end{minipage}
\vspace{5pt}
\caption{Two embeddings of the same equilateral graph whose induced cycle double covers define different vicinity graphs.}\label{fig:graph-diff-emb}
\end{figure}
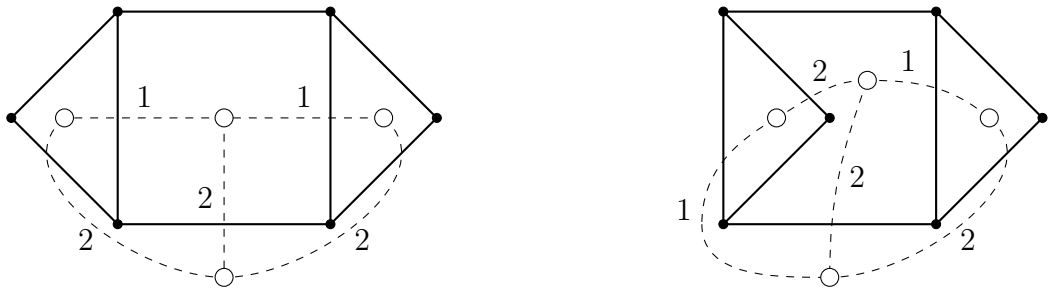

{
2) As we have seen, the estimate~\eqref{eq:amicoh} for $m=2$, which is based on double covers consisting of (pumpkin) stars,  is in the case of bridgeless graphs possibly improved by~\eqref{eq:amicoh-d}, which deals with double covers consisting of chains of cycles. The crucial point for this possible improvement is however that -- instead of an estimate on the spectral gap of stars -- the isoperimetric inequality for doubly connected graphs~\cite[Thm.~2.1]{BanLev17} can be used in order to estimate $\eta$, in case we are dealing with a \emph{cycle} double cover. Thus, the assertion in Theorem~\ref{eq:amicoh-d} carries over to the case of a double cover each of whose elements is merely a doubly connected subgraph of $\Graph$. We remind that a sharper -- if more technical -- estimate on $\eta$ is available in this case, see~\cite[Prop.~3.4]{BerKenKur17}.

3) More generally, Theorem~\ref{thm:amicoh-basic} can be applied with respect to double covers consisting of faces whenever a well-defined notion of duality exists.  This is e.g.\ the case for higher dimensional polytopes with respect to the canonical duality~\cite{Wen03} and for embedded graphs with respect to Petrie duality~\cite{Wil79}.

4) It is also possible to define $m$-fold covers of homogeneous simplicial $k$-complexes based on facets (i.e., on maximal simplices: $(k-1)$-dimensional hypertetrahedra); or else on all simplices of dimension $\le k-1$. Indeed, relations between normalized Laplacian eigenvalues of different dimensions are known, see~\cite[Thms.~5.1 and 5.3]{HorJos13b}.
 We do not go into details.
}
\end{rem}

We have outlined that estimates based on cycle double covers seem to perform very erratically in comparison with different double covers. Unfortunately, we do not have a cogent explanation for this phenomenon. A careful analysis of the proof of Theorem~\cite[Thm.~1.2]{AmiCoh18} suggests that the closer  the $i$-th eigenvalue of the Laplacian is to its mean value on each $\mathcal {\mathcal U}_i$, the more accurate the estimates on $\lambda_i(\Graph)$ based on the double cover $(\mathcal {\mathcal U}_i)_{i\in I}$ are. Hence, even a partial qualitative knowledge of the eigenfunction may help design a more convenient double cover that, in turn, allows for sharper estimates. An important class of metric graphs whose eigenfunctions are relatively well understood will be discussed in the next section.

\section{Generalized cycle double covers and pumpkin chains}\label{sec:pumpk}

After discussing in detail regular polyhedra, in this section we are going to consider a further class of graphs that does trivially satisfy the cycle double cover conjecture: \emph{pumpkins} and, more generally, \emph{pumpkin chains}.


%

An \(m\)-pumpkin \(\mathcal P\) is by definition a metric graph consisting of two vertices connected by arbitrarily (but finitely) many parallel edges $\me_i$ with edge lengths $\ell_i\in (0,\infty)$.

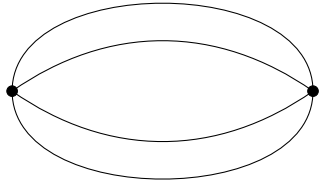
\begin{figure}[H]
\begin{tikzpicture}[scale=1]
\coordinate (a) at (0,0);
\coordinate (b) at (4,0);
\draw[fill] (a) circle (2pt);
\draw[fill] (b) circle (2pt);
\draw[bend left=35] (a) edge (b);
\draw[bend right=35] (a) edge (b);
\draw[bend left=90] (a) edge (b);
\draw[bend right=90] (a) edge (b);
\end{tikzpicture}
\vspace{5pt}
\caption{ A pumpkin}
\end{figure}

The eigenvalues of the Laplacian on an equilateral pumpkin can be derived explicitly. For non-equilateral pumpkins the eigenvalue problem becomes more difficult; furthermore, Theorem~\ref{thm:amicoh-basic} only yields a tautology whenever applied to a star double cover, as observed in Remark~\ref{rem:tautol}.
 However, we shall demonstrate how to apply Theorem \ref{thm:amicoh-basic} to cycle double covers of \(\mathcal P\) to obtain a lower bound for eigenvalues of the Laplacian on \(\mathcal{P}\). To define a cycle double cover \(\mathfrak{C}=(\mathcal C_i)_{1\leq i\leq m}\) for \(\mathcal P\) we consider the cycles given by \(\mathcal C_i=\me_i\cup \me_{i+1}\) for \(i=1,\ldots,m-1\) and $\mathcal C_m=\me_m\cup \me_1\). Note that pumpkins obviously are planar and the just constructed double cover is the one described in the previous section for a specific plane embedding of \(\mathcal P\). The vicinity graph \(\Gamma\) induced by \(\mathfrak{C}\) is itself a discrete cycle, where \(i\) and \(i+1\) are adjacent for \(i=1,\ldots,m-1\) and the edge connecting \(i\) and \(i+1\) has the edge weight \(\ell_i\) and the edge connecting \(m\) and \(1\) has weight \(\ell_m\).
Note that, depending on the ordering of the edge set \(\{\me_1,\ldots,\me_m\}\), we obtain different configurations of the cycle double cover. In fact, there are \(\frac{(m-1)!}{2}\) such configurations for \(\mathcal P\) if all edges have different lengths.

\begin{exa}
In order to compare different double cover configurations for a given non-equilateral pumpkin we consider a \(4\)-pumpkin \(\mathcal{P}\) consisting of two edges \(\me_1,\me_1'\) of length \(1\) and two edges \(\me_a,\me_a'\) of variable length \(a\geq 1\) respectively. Following the construction above we find exactly two cycle double covers \(\mathfrak{C}_1\) and \(\mathfrak{C}_2\) of \(\mathcal{P}\) given by
	\begin{align*}
		\mathfrak{C}_1 & =(\me_1\cup\me_1',\me_1'\cup\me_a',\me_a'\cup\me_a,\me_a\cup\me_1),
		& \mathfrak{C}_2 & =(\me_1\cup\me_a,\me_a\cup\me_1',\me_1'\cup\me_a',\me_a'\cup\me_1).
	\end{align*}
	It can be shown that the spectral gap of the normalized Laplacians corresponding to the respective vicinity graphs \(\Gamma_i\) are given by
	\begin{align*}
		\alpha_2(\Gamma_1) & =1,
		& \alpha_2(\Gamma_2) & =\frac{2}{a+1}.
	\end{align*}
	Furthermore, the minimal spectral gaps \(\eta_i\) of the single covering elements are
	\begin{align*}
		\eta_1 & =\frac{\pi^2}{a^2}, & \eta_2 & =\frac{4\pi^2}{(a+1)^2}
	\end{align*}
	and Theorem \ref{thm:amicoh-basic} yields the lower bounds 
	\begin{align}\label{eq:estimate-4pumpkin}
		\lambda_2(\mathcal P) & \geq \frac{\pi^2}{2a^2}, & \lambda_2(\mathcal P) & \geq \frac{4\pi^2}{(a+1)^3}.
	\end{align}
	One quickly checks that the first estimate gives a better lower bound for \(\lambda_2(\mathcal P)\) if and only if \(a> 2+\sqrt 5\). This is likely due to the fact that the largest cycle of our \(4\)-pumpkin has length \(2a\) and the eigenfunction corresponding to the actual spectral gap \(\lambda_2(\mathcal P)=\frac{\pi^2}{a^2}\) is only supported by this cycle. So, one might actually expect that the cycle \(\me_a\cup\me_a'\) is an element of the ``optimal'' double cycle cover for \(\lambda_2(\mathcal P)\).
	
	Even in this simple example the choice of an optimal double cover depends on the ratio of the single edge lengths and the choice seems to be more involved if the number of edges increases.
\end{exa}

A \emph{pumpkin chain} arises by taking an interval, subdividing it into $n$ pieces by inserting $n-1$ vertices $\mv_{1},\ldots,\ldots,\mv_{n-1}$, and adding arbitrarily (but finitely) many parallel edges between any two consecutive vertices; we refer to~\cite[\S~5]{KenKurMal16} for a precise definition, but the following picture will probably suffice.

\begin{figure}[H]
\begin{tikzpicture}[scale=1]
\coordinate (a) at (0,0);
\coordinate (b1) at (1,1);
\coordinate (b) at (2,0);
\coordinate (c) at (3.5,0);
\coordinate (d1) at (4.75,1);
\coordinate (d) at (6,0);
\draw[fill] (a) circle (2pt);
\draw[fill] (b) circle (2pt);
\draw[fill] (c) circle (2pt);
\draw[fill] (d) circle (2pt);
\draw (a) -- (b);
\draw[bend left=45] (a) edge (b);
\draw[bend right=45] (a) edge (b);
\draw[bend left=60] (b) edge (c);
\draw[bend right=60] (b) edge (c);
\draw[bend left=45] (c) edge (d);
\draw[bend right=45] (c) edge (d);
\draw[bend left=20] (c) edge (d);
\draw[bend right=20] (c) edge (d);
\end{tikzpicture}
\vspace{5pt}
\caption{ A pumpkin chain}
\label{fig:shifting-pumpkins}
\end{figure}
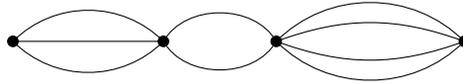

If all parallel edges within each pumpkin have the same length, we call a pumpkin chain \emph{locally equilateral}.

It has become increasingly clear over the last few years that pumpkin chains represent an important class of metric graphs. Locally equilateral pumpkin chains enjoy several good properties (in particular, the eigenfunction corresponding to the spectral gap is monotonic along the chain) and they are hence often used as reference graphs in geometric spectral theory for quantum graphs: we mention two relevant justifications in form of reductions obtained in~\cite[Lemma~5.4]{KenKurMal16} and~\cite[Lemma~5.3]{BerKenKur19}.


\begin{lemma}
\label{lem:locally-equilateral}
  Let $\Graph$ be compact and connected.
  Then there exist two locally equilateral pumpkin chains $\mathcal{P}_1,\mathcal P_2$, both with total length no larger than that of $\Graph$, such that
  \begin{displaymath}
\lambda_2 (\mathcal{P}_2)\geq	\lambda_2 (\Graph) \ge  \lambda_2 (\mathcal{P}_1).
  \end{displaymath}
  Furthermore, $\mathcal P_2$ can be taken to have same diameter as $\Graph$, whereas the bridges within $\mathcal P_1$ can be taken to have total length no longer than the bridges within $\Graph$.
\end{lemma}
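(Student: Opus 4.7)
The statement combines two reduction theorems already available in the literature, so the plan is to assemble $\mathcal{P}_1$ and $\mathcal{P}_2$ separately and verify that the desired metric side conditions are preserved by the respective constructions.

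For the lower estimate $\lambda_2(\mathcal{P}_1)\le\lambda_2(\Graph)$, I would directly invoke \cite[Lemma~5.4]{KenKurMal16}, which produces a locally equilateral pumpkin chain from a general connected compact metric graph by a sequence of surgery operations (symmetrization of pumpkins, gluing of vertices, and bridge contractions), each of which is known to lower the spectral gap. The control on the total length and on the bridges is built into those surgery steps: vertex identifications never increase $L$, while the surgeries used to enforce local equilaterality only redistribute mass \emph{within} individual pumpkins, so in particular the cumulative length of the \emph{bridges} can only decrease. The verification here amounts to checking, step by step through the reduction in \cite{KenKurMal16}, that a bridge of $\mathcal{P}_1$ can always be traced back to a bridge of $\Graph$ of at least that length.

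For the upper estimate $\lambda_2(\mathcal{P}_2)\ge\lambda_2(\Graph)$ I would apply the dual construction of \cite[Lemma~5.3]{BerKenKur19}, which constructs a locally equilateral pumpkin chain with the same diameter as $\Graph$ and at most the same total length, using surgery moves that raise (or preserve) the spectral gap. Here one extracts a shortest realization of the diameter as a backbone, projects all remaining edges onto the intervals between consecutive backbone vertices to form pumpkins, and then symmetrizes each pumpkin into its equilateral counterpart. The diameter is preserved by construction, and the transplantation of off-backbone edges uses length-decreasing moves.

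Combining the two outputs gives the required pair $(\mathcal{P}_1,\mathcal{P}_2)$. The main obstacle is not conceptual but bookkeeping: one must make sure that the surgeries in \cite{KenKurMal16} actually admit a variant in which the bridge length is tracked (and not merely the total length), since the version stated there does not explicitly record the bridge hypothesis. A careful inspection of the proof shows that the bridge contractions used to form pumpkin chunks act separately from the bridge segments of the chain, so the bridge length of $\mathcal{P}_1$ coincides with the sum of lengths of those edges of $\Graph$ that remain uncontracted as bridges. With this refinement in place, no further work is needed.
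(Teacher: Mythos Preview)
Your proposal is correct and matches the paper's own treatment: the lemma is not proved in the paper but is stated as a direct combination of \cite[Lemma~5.4]{KenKurMal16} and \cite[Lemma~5.3]{BerKenKur19}, exactly the two reductions you invoke for $\mathcal P_1$ and $\mathcal P_2$ respectively. Your additional bookkeeping remarks on tracking bridge length through the surgery steps go slightly beyond what the paper records, but the underlying approach is the same.
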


While their eigenvalues and eigenfunctions are fairly well understood, see~\cite[\S~5]{BerKenKur19}, there are hardly any spectral estimates for pumpkin chains.

Let us apply our theory in order to deduce lower bounds on the spectral gap of pumpkin chains.
A first estimate can be directly derived from Proposition~\ref{prop:aminicoh}: it holds for pumpkin chains that are not necessarily locally equilateral.
\begin{prop}\label{prop:diam}
A chain \(\mathcal P\) of $n$ pumpkins \(\mathcal P_1,\ldots,\mathcal P_n\) admits the lower bound
	\begin{equation}\label{eq:acs-cdc-pmax}
	\lambda_2(\mathcal P)\ge \frac{\pi^2}{4\ell_{\max}^2}
		\frac{1}{\sum\limits_{i=1}^n |{\mathcal P}_i|}\frac{1}{\sum\limits_{i=1}^n \frac{1}{|{\mathcal P}_i|}},
	\end{equation}
	where $\ell_{\max}$ is the maximal length of the edges in \(\mathcal P\) and $|\mathcal P_i|$ is the Lebesgue measure of the pumpkin $\mathcal P_i$.
\end{prop}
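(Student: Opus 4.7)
The plan is to apply Proposition~\ref{prop:aminicoh} to $\mathcal P$ and then reduce the estimate on $\alpha_2$ of the resulting vicinity graph to a purely combinatorial computation via the diameter--volume inequality in~\eqref{eq:bounds-normlapl}. Label the vertices of the chain $\mv_0,\mv_1,\ldots,\mv_n$ so that the consecutive pumpkin $\mathcal P_i$ connects $\mv_{i-1}$ to $\mv_i$. After reducing parallel edges within each pumpkin, the underlying combinatorial graph of $\mathcal P$ is simply the weighted path $P_{n+1}$ with edge weights $|\mathcal P_1|,\ldots,|\mathcal P_n|$, and -- by parts 1) and 2) of the remark following Proposition~\ref{prop:aminicoh} -- this coincides with the vicinity graph $\Gamma$ of the natural pumpkin star double cover of $\mathcal P$.

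Plugging this into Proposition~\ref{prop:aminicoh}, using the first entry of the maximum, I would obtain
\[
\lambda_2(\mathcal P)\ge \frac{\pi^2}{8\ell_{\max}^2}\,\alpha_2(\Gamma),
\]
so that only a lower estimate on $\alpha_2(\Gamma)$ is left. For this I would invoke the diameter--volume bound $\alpha_2(\Gamma)\ge 4\bigl(D_{\mu^{-1}}(\Gamma)\,\Vol_\mu(\Gamma)\bigr)^{-1}$ from~\eqref{eq:bounds-normlapl}. Since $\Gamma$ is a path, the $\mu^{-1}$-distance is additive along the unique geodesic between any two vertices, so
\[
D_{\mu^{-1}}(\Gamma)=\sum_{i=1}^n\frac{1}{|\mathcal P_i|},\qquad \Vol_\mu(\Gamma)=m(m-1)L=2\sum_{i=1}^n|\mathcal P_i|,
\]
giving $\alpha_2(\Gamma)\ge 2\bigl(\sum_i|\mathcal P_i|\cdot\sum_i|\mathcal P_i|^{-1}\bigr)^{-1}$. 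Substituting this into the previous display yields exactly~\eqref{eq:acs-cdc-pmax}.

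There is essentially no hard step: the argument is just a combination of two prior results. The only point that needs attention is to verify that the pumpkin star double cover of $\mathcal P$ -- consisting of the $n+1$ pumpkin stars centered at the vertices $\mv_0,\ldots,\mv_n$ -- really is a double cover with vicinity graph the weighted path described above. This is immediate, because each edge of $\mathcal P$ lies in exactly one pumpkin and is therefore incident to exactly two of the chain's vertices. Hence the main potential obstacle -- that one might be unable to choose a double cover with a tractable vicinity graph -- disappears for this class of graphs.
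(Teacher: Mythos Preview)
Your proposal is correct and follows essentially the same route as the paper's proof: apply Proposition~\ref{prop:aminicoh} with the (pumpkin) star double cover, identify the vicinity graph $\Gamma$ with the weighted path $P_{n+1}$ carrying edge weights $|\mathcal P_i|$, and then bound $\alpha_2(\Gamma)$ via the diameter--volume inequality in~\eqref{eq:bounds-normlapl} using $\Vol_\mu(\Gamma)=2\sum_i|\mathcal P_i|$ and $D_{\mu^{-1}}(\Gamma)=\sum_i|\mathcal P_i|^{-1}$. The only cosmetic difference is that you spell out the identification of $\Gamma$ with the reduced combinatorial graph a bit more explicitly than the paper does.
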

Needless to say, the Lebesgue measure of the pumpkin $\mathcal P_i$ is the total length of its edges; we casually refer to $|\mathcal P_i|$ as the \textit{total length} of $\mathcal P_i$.

\begin{proof}
	Applying Proposition \ref{prop:aminicoh} yields
		\[\lambda_2(\mathcal P)\geq \frac{\pi^2}{8\ell_{\max}^2}\alpha_2(\Gamma),\]
	where \(\Gamma\) is the vicinity graph corresponding to the star double cover of \(\mathcal P\). In fact, \(\Gamma\) is a path on \(n+1\) vertices with edge weights \(\mu_i:=|{\mathcal P}_i|\) for \(i=1,\ldots,n\). Thus, the volume of \(\Gamma\) with respect to the degree vertex weight and the diameter of \(\Gamma\) with respect to the inverse edge weight \(\mu^{-1}\) are
	\begin{align*}	
		\Vol_\mu(\Gamma) & =2\sum_{i=1}^n |\mathcal P_i|  & \D_{\mu^{-1}}(\Gamma) & =\sum_{i=1}^n\frac{1}{|{\mathcal P}_i|}.
	\end{align*}	
	Using the lower bound \eqref{eq:bounds-normlapl} proves the claim.
	\end{proof}


\begin{cor}
A chain \(\mathcal P\) of $n$ pumpkins \(\mathcal P_1,\ldots,\mathcal P_n\) admits the lower bound
	\begin{equation}\label{eq:harm}
		\lambda_2(\mathcal P)\geq \frac{4|\mathcal P|_{\min}|\mathcal P|_{\max}}{\left(|\mathcal P|_{\min}+|\mathcal P|_{\max}\right)^2}\cdot \frac{\pi^2}{4n^2\ell_{\max}^2},
	\end{equation}
	where \(\ell_{\max}\) is the maximal length the edges in \(\mathcal P\) and $|\mathcal P|_{\min},|\mathcal P|_{\max}$ are the maximal and minimal total lengths among those of \(\mathcal P_1,\ldots,\mathcal P_n\).
\end{cor}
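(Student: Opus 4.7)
The plan is to derive the corollary as a direct consequence of Proposition~\ref{prop:diam}. By that proposition, it suffices to establish the inequality
\[
\left(\sum_{i=1}^n |\mathcal P_i|\right)\left(\sum_{i=1}^n \frac{1}{|\mathcal P_i|}\right) \le \frac{n^2(|\mathcal P|_{\min}+|\mathcal P|_{\max})^2}{4\,|\mathcal P|_{\min}\,|\mathcal P|_{\max}},
\]
because inverting and substituting into~\eqref{eq:acs-cdc-pmax} immediately produces~\eqref{eq:harm}.

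The displayed inequality is precisely the classical Kantorovich (or Pólya--Szeg\H{o}) inequality: if $x_1,\ldots,x_n \in [m,M]$ with $0 < m \le M$, then
\[
\left(\sum_{i=1}^n x_i\right)\left(\sum_{i=1}^n \frac{1}{x_i}\right) \le \frac{n^2(m+M)^2}{4mM}.
\]
I would apply this with $x_i := |\mathcal P_i|$, $m := |\mathcal P|_{\min}$ and $M := |\mathcal P|_{\max}$, noting that by definition $x_i \in [m,M]$ for every $i$.

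For completeness I would sketch why the Kantorovich inequality holds in this form: for each $i$, convexity of the map $t\mapsto 1/t$ on $[m,M]$ yields $\frac{1}{x_i} \le \frac{m+M-x_i}{mM}$, so that $\sum_i \frac{1}{x_i} \le \frac{n(m+M) - \sum_i x_i}{mM}$; then the elementary estimate $S(n(m+M)-S)\le \frac{n^2(m+M)^2}{4}$ for $S=\sum_i x_i$ (which is just AM--GM applied to $S$ and $n(m+M)-S$) gives the desired bound. No step here presents a genuine difficulty; the main content of the corollary is simply the recognition that the harmonic-type product appearing in Proposition~\ref{prop:diam} is controlled by the extreme pumpkin masses via Kantorovich, with equality (in that inequality) attained exactly when the $|\mathcal P_i|$ are split evenly between the two extreme values.
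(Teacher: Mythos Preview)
Your proposal is correct and follows essentially the same route as the paper: both derive the corollary from Proposition~\ref{prop:diam} by bounding the product $\bigl(\sum_i |\mathcal P_i|\bigr)\bigl(\sum_i |\mathcal P_i|^{-1}\bigr)$ via the Kantorovich/P\'olya--Szeg\H{o} inequality (the paper phrases it as an estimate on the ratio of arithmetic to harmonic mean and cites Mitrinovi\'c). Your version is in fact slightly more complete, since you include a short self-contained proof of the Kantorovich inequality rather than merely a reference.
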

\begin{proof}
Let us take a closer look at~\eqref{eq:acs-cdc-pmax}.
Because, $\frac{1}{n^2}\sum\limits_{i=1}^n |{\mathcal P}_i|\sum\limits_{i=1}^n \frac{1}{|{\mathcal P}_i|}$ is the quotient of the arithmetic and the harmonic mean of $(|\mathcal P_i|)_{1\le i\le n}$, by a known estimate (cf.~ \cite[\S 2.11]{Mit70})
\[
\sum\limits_{i=1}^n |{\mathcal P}_i|\sum\limits_{i=1}^n \frac{1}{|{\mathcal P}_i|}\le  
\frac{4n^2|\mathcal P|_{\max}|\mathcal P|_{\min}}{\left( |\mathcal P|_{\max}+|\mathcal P|_{\min}\right)^2}.
\]
This concludes the proof.
\end{proof}
The above estimates complement a recent \emph{upper} bound for the spectral gap of a chain of $n\ge 2$ pumpkins with total length $L$: it has been proved in~\cite{BorCorJon19} that 
\[
\lambda_2(\mathcal P)\le \frac{(n+1)^2 \pi^2}{4L^2}.
\]
\begin{rem}
Combining Proposition~\ref{prop:diam} with Lemma~\ref{lem:locally-equilateral} we may deduce a lower bound on the spectral gap of an arbitrary quantum graph, provided some information on the critical sets of an eigenfunction associated with the spectral gap is known; we omit the (rather technical) details and refer to the proof of~\cite[Lemma~5.3]{BerKenKur19}.
\end{rem}

The relevant idea that has led us to Proposition~\ref{prop:amicoh-cdc} is that the elements of a cycle double cover $ \mathfrak C = (\mathcal C_i) _ {1\leq i\leq k} $
satisfy \eqref{eq:lambda2cyc}. To this purpose, the elements of the double cover need actually not be cycles at all.

\begin{defi}
Let $\Graph$ be a metric graph. A \emph{generalized cycle double cover} of $\Graph$ is a double cover whose elements are doubly connected subgraphs of $\Graph$.
\end{defi}
The assertion of Proposition~\ref{prop:amicoh-cdc} carries over verbatim to metric graphs admitting such generalized cycle double covers: using
\begin{equation}\label{eq:lambda2cyc-bis}
\lambda_2(\mathcal C_i)\ge \frac{4\pi^2}{|\mathcal C_i|^2},\qquad i=1,\ldots,k,
\end{equation}
by~\cite[Thm.~2.1]{BanLev17} we obtain the following result.
\begin{prop}\label{prop:amicoh-gcdc}
Given a metric graph $\Graph$ with a generalized cycle double cover $\mathfrak C:=(\mathcal C_i)_{1\le i\le k}$ and associated vicinity graph $\Gamma$, the $k$ lowest eigenvalues of the Laplacian with natural conditions on $\Graph$ satisfy
\begin{equation}\label{eq:amicoh-gd}
\lambda_i(\Graph)\ge 2\pi^2 \alpha_i(\Gamma)\min_{1\le j\le k}\frac{1}{|\mathcal C_j|^2},\qquad i=1,\ldots,k.
\end{equation}
\end{prop}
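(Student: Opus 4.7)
The plan is to deduce Proposition~\ref{prop:amicoh-gcdc} as a direct specialization of Theorem~\ref{thm:amicoh-basic} to the case $m=2$, much in the spirit of the proof of Proposition~\ref{prop:amicoh-cdc}. First I would observe that a generalized cycle double cover $\mathfrak C=(\mathcal C_i)_{1\le i\le k}$ is, by Definition~\ref{def:mfold-cover}, a $2$-fold cover of $\Graph$ by connected subgraphs, and that the vicinity graph $\Gamma$ associated to it satisfies all the assumptions of Theorem~\ref{thm:amicoh-basic}; thus we immediately obtain
\[
\lambda_i(\Graph)\ge \frac{1}{2}\,\eta\,\alpha_i(\Gamma),\qquad i=1,\ldots,k,
\]
with $\eta=\min_{1\le j\le k}\lambda_2(\mathcal C_j)$.

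The only remaining step is to bound $\eta$ from below. Each $\mathcal C_j$ is by assumption a doubly connected metric subgraph of $\Graph$, hence~\eqref{eq:lambda2cyc-bis} -- that is, the isoperimetric-type inequality~\cite[Thm.~2.1]{BanLev17} applied to the standard Laplacian on $\mathcal C_j$ -- yields
\[
\lambda_2(\mathcal C_j)\ge \frac{4\pi^2}{|\mathcal C_j|^2},\qquad j=1,\ldots,k,
\]
so that
\[
\eta \ge 4\pi^2\min_{1\le j\le k}\frac{1}{|\mathcal C_j|^2}.
\]
Plugging this lower bound for $\eta$ into the inequality above yields precisely~\eqref{eq:amicoh-gd}, which concludes the proof.

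Since the proof is essentially a two-line reduction to results established earlier in the paper, there is no substantial obstacle. The one place that deserves a moment's attention is the justification that~\cite[Thm.~2.1]{BanLev17} is indeed applicable to each $\mathcal C_j$ viewed as a standalone metric graph equipped with natural (standard) vertex conditions -- which is in fact exactly the setting of that reference, so no adaptation is required. Compared to Proposition~\ref{prop:amicoh-cdc}, the only novelty is that the covering elements need not be cycles: doubly-connectedness is precisely the minimal topological hypothesis under which the universal lower bound~\eqref{eq:lambda2cyc-bis} is available, and any further structural information on the $\mathcal C_j$'s (e.g.\ actual cycles, or the sharper estimates in~\cite[Prop.~3.4]{BerKenKur17}) could be plugged in to refine the bound without changing the overall structure of the argument.
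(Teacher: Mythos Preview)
Your proof is correct and follows exactly the paper's own argument: apply Theorem~\ref{thm:amicoh-basic} with $m=2$ and bound $\eta$ from below via the Band--L\'evy inequality~\eqref{eq:lambda2cyc-bis} for doubly connected graphs. There is nothing to add.
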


Next, we provide two algorithms to define a generalized cycle double cover of a pumpkin chain $\mathcal P$, based on an arbitrary but fixed planar embedding of $\mathcal P$ consisting of $n$ linked pumpkins. We explain our constructions based on the example of the pumpkin chain in Figure~\ref{fig:shifting-pumpkins}.
Indeed,  pumpkin chains are planar but their cycle double covers lead to disconnected vicinity graphs: i.e., this choice leads, for instance, to $\alpha_2=0$ and hence to trivial eigenvalue estimates. However, it is easy to come up with different generalized cycle double covers of pumpkin chains, each leading to a different estimate based on Proposition~\ref{prop:aminicoh} or Proposition~\ref{prop:amicoh-cdc}.

In the following we denote by $m_i$ the number of parallel edges between $\mv_{i-1}$ and $\mv_{i}$ and we make the assumption that \(\mathcal P\) is bridgeless, i.e.~\(m_i\ge 2\) holds for \(1\le i\le n\): accordingly, the edges belonging to the $i$-th pumpkin $\mathcal P_i$ are $\me_{i,1},\ldots,\me_{i,m_i}$. Unless explicitly stated we assume the pumpkin chain to be locally equilateral and denote by $\ell_i$ the common length of all such $m_i$ edges. Nevertheless, we point that our constructions may also be applied to the more involved case of non-equilateral pumpkin chains.

To construct cycle generalized cycle double covers for \(\mathcal P\) we start by considering the cycle double covers of the single pumpkins \(\mathcal P_i\) constructed previously: we set
\begin{align}
	\mathcal C_{i,j}:=\me_{i,j}\cup\me_{i,j+1}, &\qquad i=1,\ldots,n,~j=1,\ldots, m_i-1\label{eq:cdc-allpumpkins1}\\
\intertext{and}
	\mathcal C_{i,m_i}:=\me_{i,m_i}\cup\me_{i,1},&\qquad i=1,\ldots,n.\label{eq:cdc-allpumpkins2}
\end{align}
This already defines a cycle double cover of \(\mathcal P\), but the corresponding vicinity graph is just the disjoint union of \(n\) (combinatorial) cycle graphs and, thus, it does not see the structure of the whole pumpkin chain anymore. Therefore, our aim is to ``glue'' the cycles \(\mathcal C_{i,j}\) to obtain generalized cycle double covers of \(\mathcal P\) that represent the structure of \(\mathcal P\) more appropriately.

\subsection{Layered double covering}

To obtain a generalized double cover \(\mathfrak{U}\) of an arbitrary pumpkin chain   $\mathcal P$ let us glue the cycles \(\mathcal C_{i,j}\) defined in \eqref{eq:cdc-allpumpkins1} and \eqref{eq:cdc-allpumpkins2} as follows:
\begin{itemize}
\item the first covering element consists of the pumpkin chain comprising all cycles that run along the ``bottom'' of $\Graph$: 
\[\bigcup_{1\leq i\leq n}\mathcal C_{i,1};\]
its spectral gap is $\frac{\pi^2}{\mathcal D^2}$, where $\mathcal D$ denotes the diameter of the pumpkin chain, i.e., $\mathcal D:=\sum_{i=1}^n \ell_i$. 
\item we continue with the following layer: 
\[\bigcup_{\substack{1\leq i\leq n,\\ 2<{m_i}}}\mathcal C_{i,2}\]
where we stipulate that we pass to the connected components if this metric subgraph is not connected, which might be the case when any of the cycles in this union is missing;
\item this process is repeated $(\max\limits_i m_i)-1$ times, thus exhausting all layers of the pumpkin chain with the covering elements:
\[
 \bigcup_{\substack{1\leq i\leq n,\\ j<{m_i}}}\mathcal C_{i,j}
\]
for \(j=1,\ldots,m_i-1\);
\item finally, we ``lock up'' each pumpkin with the final covering element:
\[
 \bigcup_{1\leq i\leq n}\mathcal C_{i,m_i}.
\]

\end{itemize}

Now, consider the graph in Figure~\ref{fig:shifting-pumpkins}: the vicinity graph that arises by means of the above construction is depicted in Figure~\ref{fig:vicin-layered}.
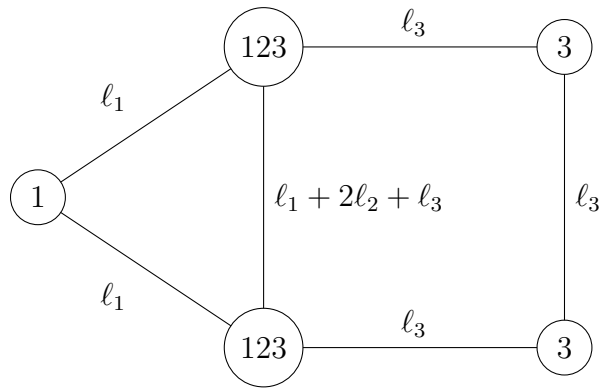
\begin{figure}[H]
\begin{tikzpicture}[scale=1]
\coordinate (a) at (0,0);
\coordinate (b) at (0,4);
\coordinate (c) at (4,4);
\coordinate (d) at (4,0);
\coordinate (e) at (-3,2);
\draw (a) -- (b)--(c)--(d) -- (a);
\draw (a) -- (e) -- (b);
\node at (0,2) [anchor=west] {$\ell_1+2\ell_2+\ell_3$};
\node at (4,2) [anchor=west] {$\ell_3$};
\node at (2,4) [anchor=south] {$\ell_3$};
\node at (2,0) [anchor=south] {$\ell_3$};
\node at (-2,1) [anchor=north] {$\ell_1$};
\node at (-2,3) [anchor=south] {$\ell_1$};
\node at (a) [circle,draw,fill=white] {123};
\node at (b) [circle,draw,fill=white] {123};
\node at (c) [circle,draw,fill=white] {3};
\node at (d) [circle,draw,fill=white] {3};
\node at (e) [circle,draw,fill=white] {1};
\end{tikzpicture}
\vspace{5pt}
\caption{ The layered graph of the graph in Figure~\ref{fig:shifting-pumpkins}. Each vertex is a cycle and the text in the vertex summarizes the pumpkins touched by the corresponding cycle.}
\label{fig:vicin-layered}
\end{figure}

Let us now look at the pumpkin chain obtained swapping the second and third pumpkin, thus obtaining the graph in Figure~\ref{fig:shifting-pumpkins-2}.

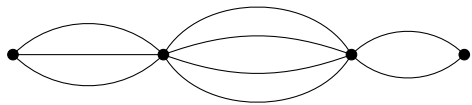
\begin{figure}[H]
\begin{tikzpicture}[scale=1]
\coordinate (a) at (0,0);
\coordinate (b) at (2,0);
\coordinate (c) at (4.5,0);
\coordinate (d) at (6,0);
\draw[fill] (a) circle (2pt);
\draw[fill] (b) circle (2pt);
\draw[fill] (c) circle (2pt);
\draw[fill] (d) circle (2pt);
\draw (a) -- (b);
\draw[bend left=45] (a) edge (b);
\draw[bend right=45] (a) edge (b);
\draw[bend left=60] (b) edge (c);
\draw[bend right=60] (b) edge (c);
\draw[bend left=45] (c) edge (d);
\draw[bend right=45] (c) edge (d);
\draw[bend left=20] (b) edge (c);
\draw[bend right=20] (b) edge (c);
\end{tikzpicture}
\vspace{5pt}
\caption{ A pumpkin chain obtained swapping pumpkins inside the graph in Figure~\ref{fig:shifting-pumpkins}. We refer to the original graph and regard the central pumpkin as pumpkin \#3.}
\label{fig:shifting-pumpkins-2}
\end{figure}

The ``layered'' vicinity graph becomes the one depicted in Figure~\ref{fig:vicin-layered-2}.

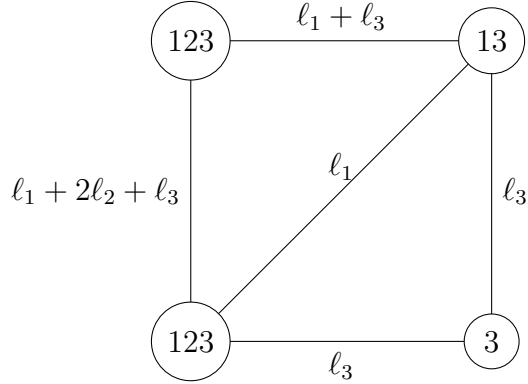
\begin{figure}[H]
\begin{tikzpicture}[scale=1]
\coordinate (a) at (0,0);
\coordinate (d) at (4,0);
\coordinate (c) at (4,4);
\coordinate (b) at (0,4);
\draw (a) -- (b)--(c)--(d) -- (a);
\draw (a) -- (c);
\node at (0,2) [anchor=east] {$\ell_1+2\ell_2+\ell_3$};
\node at (4,2) [anchor=west] {$\ell_3$};
\node at (2,4) [anchor=south] {$\ell_1+\ell_3$};
\node at (2,0) [anchor=north] {$\ell_3$};
\node at (2,2) [anchor=south] {$\ell_1$};
\node at (a) [circle,draw,fill=white] {123};
\node at (b) [circle,draw,fill=white] {123};
\node at (c) [circle,draw,fill=white] {13};
\node at (d) [circle,draw,fill=white] {3};
\end{tikzpicture}
\vspace{5pt}
\caption{ The layered vicinity graph of the graph in Figure~\ref{fig:shifting-pumpkins-2}  }
\label{fig:vicin-layered-2}
\end{figure}
If pumpkins are swapped in order to avoid disconnected layers, then the vicinity graph $\Gamma$ becomes smaller and tends to be more strongly connected, leading to improved estimates; but at the same time to estimates (via~\eqref{eq:amicoh}) only for a lower number of eigenvalues of $\mathcal P$.

\subsection{Concatenated double covering}

In layered double coverings the vicinity graph $\Gamma$ does not detect the position of the pumpkins: whether they are peripheral (which, based on considerations in~\cite[\S~5]{BerKenKur19}, should heuristically lead to lower $\alpha_2(\Gamma)$) or rather more central. This suggests yet one different approach to the task of double-covering a metric graph $\mathcal P$:
\begin{itemize}
\item the first \(n-1\) covering elements $\mathcal U_1,\ldots,\mathcal U_{n-1}$ consist of 
\[
 \mathcal U_i=\mathcal C_{i,1}\cup\mathcal C_{i+1,2},\quad i=1,\ldots,n-1,
\]
thus linking successive pumpkins within $\mathcal P$;
\item for the other elements of the double covering we choose the remaining cycles \(\mathcal C_{i,j}\) in \eqref{eq:cdc-allpumpkins1} and \eqref{eq:cdc-allpumpkins2} that do not appear in any of the elements \(\mathcal U_i\) constructed in the previous step.
\end{itemize}

The corresponding vicinity graph has $1-n+\sum_{i=1}^n m_i$ vertices; it consists of a ``backbone'' given by the vertices $\mathcal U_1,\ldots,\mathcal U_{n-1}$; by two (possibly degenerate) cycles on $m_i$ edges attached to $\mathcal U_1$ and $\mathcal U_{n-1}$ (corresponding to the cycles inside the first and the $n$-th pumpkin); and, for all $i=1,\ldots,n-1$, by paths on $m_i-1$ edges attached to both vertices ${\mathcal U}_i$ and $\mathcal U_{i+1}$, corresponding to the remaining cycles inside the $i$-th pumpkin.


\begin{figure}[H]
\begin{tikzpicture}[scale=1]
\coordinate (a) at (0,0);
\coordinate (b) at (-2,1);
\coordinate (c) at (-2,-1);
\coordinate (d) at (3,0);
\coordinate (e) at (5,2);
\coordinate (f) at (7,0);
\coordinate (g) at (5,-2);
\draw (a) -- (b)--(c)--(a);
\draw (a) -- (d);
\draw (d) -- (e)--(f)--(g)--(d);
\node at (1.5,0) [anchor=south] {$2\ell_2$};
\node at (-1,1) [anchor=west] {$\ell_1$};
\node at (-1,-1) [anchor=west] {$\ell_1$};
\node at (-2,0) [anchor=east] {$\ell_1$};
\node at (4,2) [anchor=north] {$\ell_3$};
\node at (6,2) [anchor=north] {$\ell_3$};
\node at (6,-2) [anchor=south] {$\ell_3$};
\node at (4,-2) [anchor=south] {$\ell_3$};
\node at (a) [circle,draw,fill=white] {12};
\node at (b) [circle,draw,fill=white] {1};
\node at (c) [circle,draw,fill=white] {1};
\node at (d) [circle,draw,fill=white] {23};
\node at (e) [circle,draw,fill=white] {3};
\node at (f) [circle,draw,fill=white] {3};
\node at (g) [circle,draw,fill=white] {3};
\end{tikzpicture}
\vspace{5pt}
\caption{ The concatenated vicinity graph of the graph in Figure~\ref{fig:shifting-pumpkins}}
\label{fig:vicin-concaten}
\end{figure}
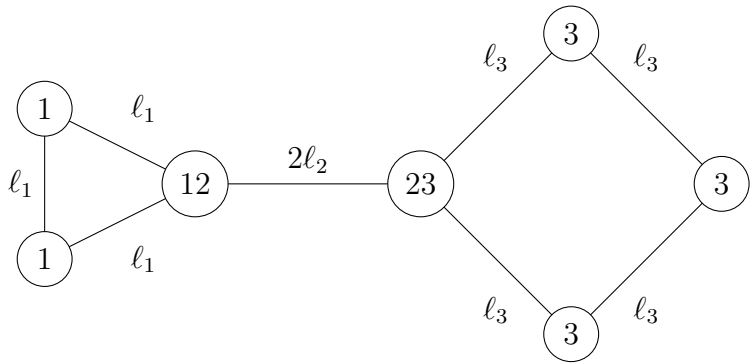

Intuitively, in comparison with the vicinity graph based on the layered construction, $\eta$ will be much larger but $\alpha_2$ will be smaller, too. In the example of the graph in Figure~\ref{fig:shifting-pumpkins}, if $\ell_1=\ell_2=\ell_3$, then 
\begin{itemize}
\item $\eta=\frac{\pi^2}{9}$ for the layered construction, while we have found the value $\alpha_2(\Gamma)\simeq 0.629$, leading to the estimate $\lambda_2(\mathcal P)\ge 0.345$;
\item $\eta=\frac{\pi^2}{4}$ for the concatenated construction, while we have found the value $\alpha_2(\Gamma)\simeq 0.229$, leading to the estimate $\lambda_2(\mathcal P)\ge 0.282$;
\end{itemize}
In the case of the pumpkin chain in Figure~\ref{fig:shifting-pumpkins} the estimates {in~\cite[Thm.~2.1]{BanLev17},~\cite[Thm.~1.1]{Ken20}, and Proposition~\ref{prop:diam}} yield
\begin{equation}\label{eq:blkp}
\lambda_2(\mathcal P)\ge 0.487, \quad\lambda_2(\mathcal P)\ge 0.055,\quad \hbox{and}\quad 
\lambda_2(\mathcal P)\ge 0.244,
\end{equation}
respectively.
If we pass to the graph in Figure~\ref{fig:shifting-pumpkins-2}, \(\alpha_2(\Gamma)\simeq 0.322\) for the concatenated construction, leading to the estimate \(\lambda_2\geq 0.398\), and $\alpha_2(\Gamma)\simeq 0.974$ for the layered construction,
leading to $\lambda_2\ge 0.533$; the latter is sharper than the estimates in~\eqref{eq:blkp}, which are all invariant under pumpkin swapping and hence do not distinguish between the graphs in Figure~\ref{fig:shifting-pumpkins} and~\ref{fig:shifting-pumpkins-2}.

\end{document}